\newcommand{\NN}{\mathbb{N}}
\newcommand{\PP}{\mathbb{P}} 
\newcommand{\RR}{\mathbb{R}}
\newcommand{\ZZ}{\mathbb{Z}}
\newcommand{\cB}{\mathcal{B}} 
\newcommand{\cC}{\mathcal{C}}
\newcommand{\cF}{\mathcal{F}}
\newcommand{\cG}{\mathcal{G}}
\renewcommand{\a}{\alpha}
\renewcommand{\d}{\delta} 
\newcommand{\G}{\Gamma}
\newcommand{\g}{\gamma} 
\renewcommand{\L}{\Lambda}
\renewcommand{\l}{\lambda}
\renewcommand{\b}{\beta} 
\renewcommand{\k}{\kappa} 
\newcommand{\Om}{\Omega}
\newcommand{\om}{\omega} 
\newcommand{\s}{\sigma}
\newcommand{\eps}{\varepsilon}
\renewcommand{\max}{\mathrm{max}}
\newcommand{\ST}{\mathrm{ST}}
\renewcommand{\b}{\beta}
\newcommand{\oo}{\infty}
\newcommand{\es}{\varnothing}
\newcommand{\se}{\subseteq}
\newcommand{\ul}{\underline}
\newcommand{\ol}{\overline}
\newcommand{\crit}{\mathrm{c}} 
\newcommand{\perc}{\mathrm{p}} 
\newcommand{\ui}{\ol\nu}
\newcommand{\bac}{Z^\leftarrow}
\newcommand{\ddcp}{{\textsc{ddcp}}}
\newcommand{\dfl}{\Lambda}
\newcommand{\dfh}{H}
\newcommand{\ki}{\kappa}
\newcommand{\kii}{\tilde\kappa}
\newcommand{\hi}{h}
\newcommand{\hii}{\tilde h}
\newcommand{\li}{\lambda}
\newcommand{\lii}{\tilde\lambda}
\newcommand{\kre}{\kappa^\star}
\newcommand{\one}{\hbox{\rm 1\kern-.27em I}}
\newtheoremstyle{slthm}
     {}
     {\baselineskip}
     {\slshape}
     {\parindent}
     {\scshape}
     {.}
     { }
     {}
\newtheoremstyle{rk}
     {}
     {\baselineskip}
     {\normalfont}
     {\parindent}
     {\scshape}
     {.}
     { }
     {}
\theoremstyle{slthm}
\newtheorem{definition}{Definition}[section]
\newtheorem{theorem}[definition]{Theorem}
\newtheorem{proposition}[definition]{Proposition}
\newtheorem{lemma}[definition]{Lemma}
\newtheorem{corollary}[definition]{Corollary}
\newtheorem{assumption}[definition]{Assumption}
\theoremstyle{rk}
\newtheorem{remark}[definition]{Remark}
\numberwithin{equation}{section}
\date{September 10, 2014}
\title[Percolation in contact processes]
{Sharpness versus robustness \\of the percolation transition\\
 in 2d contact processes}
\author{J.\ van den Berg}
\address{Centrum Wiskunde \& Informatica (CWI),
P.O.~Box 94079, 1090~GB Amsterdam, The Netherlands,
and {V}{U} University Amsterdam, The Natherlands}
\email{J.van.den.Berg@cwi.nl}
\author{J. E. Bj\"ornberg}
\address{Uppsala Universitet,  Department of Mathematics,
P.O.~Box 480,
751~06 Uppsala,
Sweden}
\email{jakob@math.uu.se}
\author{M. Heydenreich}
\address{Mathematisch Instituut,
Universiteit Leiden,
P.O.~Box 9512,
2300 RA Leiden,
The Netherlands; 
Centrum Wiskunde \& Informatica (CWI),
P.O.~Box 94079, 1090~GB Amsterdam, The Netherlands}
\email{markus@math.leidenuniv.nl}
\thanks{The research of MH is supported by the Dutch Organization of Scientific Research (NWO).
The research of JEB is supported by the Knut and Alice Wallenberg
Foundation.}
\keywords{Contact process, percolation, sharp thresholds, approximate zero-one law}
\subjclass[2010]{Primary 60K35; Secondary 92D40, 92D30, 82B43}
\begin{document}
\begin{abstract}
We study versions of the contact process with three
states, and with infections occurring at a rate
depending on the overall infection density. 
Motivated by a model described in \cite{KefiRietkAladoPueyo07} 
for vegetation patterns in arid landscapes,
we focus on percolation under  
invariant measures of such processes. 
We prove that the percolation 
transition is \emph{sharp} (for one of our models this 
requires a reasonable assumption). 
This is shown to contradict a form of 
`robust critical behaviour' 
with power law cluster size distribution for a range
of parameter values, as suggested in \cite{KefiRietkAladoPueyo07}.
\end{abstract}
\maketitle


\section{Introduction and background}
\label{intro_sec}
Percolative systems with weak dependencies, 
such as the contact process and its variants, 
are in the spotlight of recent mathematical research.
The present article studies versions of the 
two-dimensional contact process that 
are motivated by models for vegetation patterns 
in arid landscapes, as put forward by biologists 
and agricultural researchers \cite{KefiRietkAladoPueyo07}. 
The central question we address is whether or not 
the percolation transition for these modified contact processes 
is \emph{sharp}. 
This paper demonstrates the  applicability of the sharpness
techniques for two-dimensional systems, even to quite realistic
models, and provides a rigorous mathematical basis
for the discussion in the life-science literature
\cite{anti-kefi-comment,KefiRietkAladoPueyo07,kefi11,anti-kefi,anti-kefi-reply}.

The models and questions we consider are defined 
precisely in the following
subsections.  Briefly speaking, however, 
we consider two main modifications of 
the `standard' two-dimensional contact process.
Firstly, rather than two states 0 and 1, we allow three states: $-1$,
0 and 1.  Secondly, the transition rates are allowed to
vary with the overall density
of 1's in the process itself.  Contact processes with three states
have been considered by several authors 
before, e.g.~\cite{Neuha92,Remen08}.
We consider two different types of 3-state contact
processes, one of which is closely related to the
process in~\cite{Remen08}, the other of which has not
previously appeared in the mathematical literature.
The second modification has, to the best of our knowledge,
not been considered previously in the mathematical
literature;
we call such processes `density-driven' (see Definition~\ref{ddcp_def})
and prove their existence in Section~\ref{ddcp_sec}.

Our main focus is the question of \emph{percolation} in such
processes:  whether or not, under an invariant distribution of the
process, there can be an unbounded connected set of 1's. 
(For general information on percolation we 
refer to \cite{Gr99} and \cite{BoRi06}.) 
For certain
parameter values an unbounded connected set of 1's occurs with
positive probability and for others not.  As the parameters are
varied, one obtains in this sense a phase transition which we refer to
as the \emph{percolation transition}.  
In~\cite{KefiRietkAladoPueyo07} it is suggested,
based on numerical simulation, that the type of model we consider may
exhibit a form of `robust critical behaviour', different from the
usual `sharp' phase transition in standard percolation models.  We
critically discuss this suggestion, based on rigorous results about
the percolation transition.  Our main results, on sharpness of the
transition and lack of `robustness', are stated in 
Theorems~\ref{nonrobust_thm_3}, \ref{nonrobust_thm},
and \ref{3st_sharp_thm} below.

The contact process is one of the most-studied interacting 
particle systems, see e.g.\ \cite{Ligge99} and references therein. 
Several multi-type variants have been studied;  most of them
have been proposed as models in 
theoretical biology, and  focus has typically been on the 
survival versus extinction of species. 
See, for example, 
Cox--Schinazi~\cite{CoxSchin09},
Durrett--Neuhauser~\cite{DurreNeuha97},
Durrett--Swindle~\cite{DurreSwind91}, 
Konno--Schinazi--Tanemura~\cite{KonnoSchinTanem04}, 
Kuczek~\cite{Kucze89}, 
Neuhauser~\cite{Neuha92}.
The question of percolation under invariant distributions
of the contact process was first studied 
by Liggett and Steif~\cite{LiggeSteif06}, and a sharpness result
for percolation under such distributions was first proved
by van den Berg~\cite{Berg11}, 
using some of the techniques introduced for Voronoi percolation in
\cite{BolloRiord06}.

We begin by describing the type of model we 
consider in more detail. 

\subsection{Contact processes}
\label{cp_sec}
The ordinary contact process on $\ZZ^d$ 
is a Markov process with state space 
$\{0,1\}^{\ZZ^d}$.  Elements $x\in\ZZ^d$ are  
called `sites' or `individuals'.
An element of
$\{0,1\}^{\ZZ^d}$ is typically  denoted by
$\eta=(\eta_x:x\in\ZZ^d)$
and those $x\in\ZZ^d$ for which $\eta_x=1$
are typically called `infected'.  Infected
individuals recover at rate $\k$, independently of each other
(often $\k$ is set to 1).  Alternatively, a healthy site can become
infected by an infected neighbour at rate $\l$.  This occurs
independently for different sites and independently of the
recoveries.  

The main facts about the contact process are the following. 
For any $\l\geq0$
there exists an \emph{upper invariant measure} 
$\ui$ on $\{0,1\}^{\ZZ^d}$ which can be obtained 
as the limiting distribution when initally
\emph{all} sites are infected (this follows from 
standard monotonicity arguments \cite{Ligge99}).
For each $\k>0$ there is a critical value
$\l_\crit=\l_\crit(\k)\in(0,\oo)$ such that if $\l\leq\l_\crit$ then
the process `dies out'.  This means that 
the only stationary distribution of the process
is the point mass $\d_\es$ on the configuration consisting of all
zeros, or equivalently $\ui=\d_\es$.
On the other hand, if $\l>\l_\crit$ then there is positive
chance that infection is transmitted indefinitely, and hence  $\ui\neq\d_\es$.  
In this regime, there is more than one invariant distribution, each invariant
distribution being a convex combination of $\d_\es$ and $\ui\neq\d_\es$.

We now describe two variations of the ordinary contact process
on which this paper focuses.  Both processes have
three states, meaning that the processes take values 
in $\{-1,0,1\}^{\ZZ^d}$.

\subsubsection{Model A}
The  first process we consider has parameters
 $\ki,\kii,\li,\lii$ and $\hi,\hii$.  The state of a site
may change spontaneously from $1$ to $0$, from $0$ to $-1$,
from $-1$ to $0$ or from $0$ to $1$, at rates 
$\ki,\kii,\hii,\hi$ respectively.  Alternatively, a site which
is in state $-1$ or $0$ may change to state $0$ or $1$, 
respectively, at a rate proportional to the number of nearest
neighbours which are in state 1, the constants of proportionality
being given by $\lii$ and $\li$, respectively.  These transition rates
are informally summarized in the following table:
\begin{center}
\begin{tabular}{l|l}
Spontaneous rates & Neighbour rates\\\hline
$1\rightarrow 0$ rate $\ki$ & $0\rightarrow 1$ 
                                rate $\li\cdot$\#(type 1 nbrs) \\
$0\rightarrow -1$ rate $\kii$  & $-1\rightarrow 0$ 
                               rate $\lii\cdot$\#(type 1 nbrs) \\
$0\rightarrow 1$ rate $\hi$ & \\
$-1\rightarrow 0$ rate $\hii$ & \\
\end{tabular}
\end{center}
If $\kii=\lii=\hii=h=0$ we thus essentially recover the
ordinary 2-state contact process.
If $\kii=\lii=\hii=0$ but $h>0$ we obtain what may be called
the 2-state process \emph{with spontaneous infection}.

This 3-state process is closely related to a model proposed
to study the desertification of arid regions in \cite{KefiRietkAladoPueyo07}.
The intuition is that 0 represents a `vacant' patch of
`good' soil, 1 represents a vegetated patch, and $-1$
represents a `bad' patch of soil which must first be improved
(to state 0) before vegetation can grow there.  Type 1 patches
can influence the states of neighbouring patches either
by spreading seeds ($0\rightarrow1$) or improving the
soil ($-1\rightarrow0$), for example by binding the soil
better with  roots.  
Much less is known about the this 3-state process than
about the ordinary contact process with two states.  
To a large extent this is 
because the notion of `path' along which
infection spreads is no longer sufficient.  
In particular, we do not know if there is a unique stationary
distribution if all the parameters are strictly positive, as is
the case for the 2-state process
with spontaneous infection.  However, most of our results
on the 3-state process are
conditional on the assumption that there is a unique
stationary distribution $\ui$ in this situation. 
A more precise formulation of the assumption is stated in 
Assumption~\ref{assump}.

\subsubsection{Model B}
The second process we study  is close
to a process studied by Remenik~\cite{Remen08}.
The parameters are $\ki,\kre,\li,\hi,\hii$, and the transitions
are summarized in the following table.
\begin{center}
\begin{tabular}{l|l}
Spontaneous rates & Neighbour rates\\\hline
$1\rightarrow0$ rate $\ki$ & 
$0\rightarrow 1$ rate $\li\cdot$\#(type 1 nbrs)\\
($0$ or $1$) $\rightarrow -1$ rate $\kre$ & \\
$0\rightarrow 1$ rate $\hi$ & \\
$-1\rightarrow 0$ rate $\hii$ & \\
\end{tabular}
\end{center}
Thus a site changes state to $-1$ at rate $\kre$
\emph{regardless of} the current state, and transitions out 
of the state $-1$ occur at a rate independent of the number 
of type 1 neighbours.  
In  light of this observation, it is possible to interpret Model B as (ordinary) \emph{contact process in a random environment}. 

In the case $\hi=0$ this process is the one studied in~\cite{Remen08}.  
Remenik puts it forward as a model for the spread of vegetation, with
a slightly different interpretation of transitions
to state $-1$ than in Model A.
In Remenik's model the interpretation is that
``if a site becomes uninhabitable, the particles living
there will soon die'' (quote from~\cite{Remen08}).
In Model A, however, transitions to $-1$ only occur
for uninhabited sites (in state $0$) with the motivation
that they ``may undergo further degradation, for example,
by processes such as erosion and soil-crust formation''
(quote from~\cite{KefiRietkAladoPueyo07}).

Model B is considerably easier to study than 
Model A.  
Indeed, for the case $\hi=0$,
Remenik interpreted the model as 
a hidden Markov chain and, building on results by 
Broman~\cite{Broma07}, proved strong results such as
complete convergence.  
A key tool to obtaining this result is a \emph{duality} relation, 
which fails for Model A. 
For the case $\hi>0$,
exponential convergence to a unique
 invariant distribution
is stated in Lemma~\ref{exp_lem} below.

\subsection{Density-driven contact processes}
It is straightforward to generalize the definitions of
the contact processes we consider to allow time-varying infection rates
$\l(t)$ and $h(t)$ (see Section~\ref{graphical_sec} for more on this).
Furthermore, in the context of vegetation spread it seems natural to 
allow the rates governing
transitions from state 0 to 1
($\li$ and $\hi$) 
to depend on the overall
density of 1's in the process itself.  For example, one may
imagine that seeds can be blown over large distances to spread
vegetation, and that whether a seed which has landed on
a vacant piece of soil indeed becomes a plant may depend
on the overall competition of the other plants.  Indeed, the model proposed 
in~\cite{KefiRietkAladoPueyo07}
includes such a mechanism.  The model there is defined
in discrete time and in a finite region, 
and it is not immediately obvious that
it is possible to define such a process 
in continuous time and on the infinite 
graph $\ZZ^d$.
However, in
Section~\ref{ddcp_sec} we prove the existence of the following
class of processes.

Let $X(t)$ be a translation invariant
3-state process (Model A or B), and write
$\rho(t)=P(X_0(t)=1)$ for the \emph{density} of the process.
\begin{definition}[\ddcp]\label{ddcp_def}
Let the functions
$\dfl,\dfh\colon[0,1]\rightarrow[0,\oo)$ be given,
and let $X(t)$ be 
a translation-invariant 3-state contact process with 
parameters $\ki$, $\kii$, $\lii$, $\hii$ and $\li(\cdot)$,
$\hi(\cdot)$ in Model A or 
with parameters $\ki$, $\kre$, $\hii$, and $\li(\cdot)$, $\hi(\cdot)$ in Model B. 
This process is called a
\emph{density-driven contact process specified by $\dfl$ and $\dfh$}
if $\li,\hi$ satisfy $\li(t)=\dfl(\rho(t))$
and $\hi(t)=\dfh(\rho(t))$ for all $t\geq0$. 
\end{definition}

We use the abbreviation {\ddcp} for `density-driven
contact process'.
Intuitively a
{\ddcp} constantly updates its infection rates based
on the current prevalence of 1's.

\subsection{Outline}
In Section~\ref{results_sec} we
state our main results, which concern on the one hand 
`sharpness' and on the other `lack of robustness'. 
In Section~\ref{robust_sec} we prove our results on lack of
robustness, deferring the proofs of our sharpness results to 
Section~\ref{sharp_sec}.
In Section~\ref{prel_sec} we describe methods and results from the
literature which are needed
for the proofs of our main sharpness results.  
Section~\ref{sharp_sec} contains the proofs of our sharpness result 
(Theorem \ref{3st_sharp_thm}).
In Section~\ref{ddcp_sec}, we 
prove in general the existence of density-driven processes.

\section{Main results}
\label{results_sec}

We first formulate a condition about exponentially fast convergence to
a unique equilibrium measure for Model A. For Model B, this assumption
can be established using standard techniques.  Subsequently, we
formulate our results on lack of robustness (in Section
\ref{sec-robust}) and sharpness (in Section \ref{perc_sec}).

\subsection{Convergence to equilibrium}
Consider Models A and B
with constant parameters.
Henceforth, we assume that 
all parameters are positive, so we assume 
 $\ki,\kii,\li,\lii,\hi,\hii>0$ for Model A
and $\ki,\kre,\li,\hi,\hii>0$ for Model B.

It is well-known (and can be easily proved by a standard 
coupling argument using 
the graphical representation, see 
Section~\ref{graphical_sec}) that the assumption $h>0$ implies
exponentially fast convergence to equilibrium in Model B. 
For any $\xi\in\{-1,0,1\}^{\ZZ^d}$ 
let us write $\mu^\xi_t$ for the law of the contact process
with initial state $\xi$. Further, for a finite set
$\L\se\ZZ^d$ let $\mu^\xi_{t;\L}$ denote the restriction
of $\mu^\xi_t$ to $\L$.  Similarly, let $\ui_\L$
denote the restriction of the upper invariant measure $\ui$ to $\L$
(i.e., marginal of $\mu^\xi_t$ on 
$\{-1,0,1\}^{\L}$).
\begin{lemma}\label{exp_lem}
For Model B with $h>0$ and any
initial state $\xi$ we have that
\[
d_{\mathrm{tv}}(\mu^\xi_{t;\L},\ui_\L)\leq
|\L|e^{-ht}.
\]
\end{lemma}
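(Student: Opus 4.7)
The plan is a standard coupling argument via the graphical representation of Section~\ref{graphical_sec}. I realise both the process $X^\xi$ started from $\xi$ and a stationary process $X^\ui$ (with $X^\ui(0)$ drawn from $\ui$ independently of the clocks) on the same probability space by letting them share all the Poisson events of the graphical representation: sitewise clocks of rates $\ki,\kre,\hi,\hii$ and directed-edge clocks of rate $\li$. The coupling inequality for the $\L$-marginals, combined with a union bound over sites, reduces the problem to a single-site estimate:
\[
d_{\mathrm{tv}}\bigl(\mu^\xi_{t;\L},\ui_\L\bigr)
\;\le\; \PP\bigl(X^\xi_{y}(t)\neq X^\ui_{y}(t)\text{ for some }y\in\L\bigr)
\;\le\; \sum_{x\in\L}\PP\bigl(X^\xi_x(t)\neq X^\ui_x(t)\bigr),
\]
so it suffices to prove $\PP(X^\xi_x(t)\neq X^\ui_x(t))\le e^{-ht}$ for each $x\in\L$.

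For the single-site bound, let $A_x$ be the event that the rate-$h$ Poisson clock at $x$ does not ring in $[0,t]$; by independence of the clocks, $\PP(A_x)=e^{-ht}$. The strategy is to show that on $A_x^c$ the two coupled copies necessarily agree at $(x,t)$, so that the disagreement event is contained in $A_x$ up to a null set. This exploits the structure of the forcing events in Model B: $\kre$-marks unconditionally reset the state to $-1$; $h$-marks set state $0$ to $1$ (leaving states $1$ and $-1$ unchanged); $\hii$-marks set state $-1$ to $0$; $\ki$-marks set state $1$ to $0$; and neighbour-transmission arrows can only promote state $0$ to $1$, so they cannot destroy a pre-existing agreement at state $1$. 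Starting from the last $h$-mark $\tau\in[0,t]$, an event-by-event case analysis of the subsequent graphical events at $x$ shows that $X^\xi_x(t)$ and $X^\ui_x(t)$ coincide.

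The main technical obstacle is the sub-case in which, at time $\tau{-}$, one copy is in state $-1$ while the other is in $\{0,1\}$: the $h$-mark then leaves the two copies unequal at $\tau$. To absorb this sub-case I would iterate the argument back to an earlier $\kre$- or $\hii$-mark at $x$, at which the copies do become equal (the ``bad'' process $Y_x(s)=\Ind{X_x(s)=-1}$ evolves as an autonomous two-state Markov chain driven only by the $\kre$- and $\hii$-clocks, independently at each site, and hence couples as soon as either clock fires at $x$), and exploit independence of the sitewise clocks to show that the compound bad event is still dominated by $e^{-ht}$. Summing the single-site estimate over $x\in\L$ then yields the claim.
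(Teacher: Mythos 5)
There is a genuine gap in the single-site step, and it is precisely where the lemma is non-trivial. Even granting the reduction to $\PP\bigl(X^\xi_x(t)\neq X^\ui_x(t)\bigr)\le e^{-ht}$, the disagreement event at $(x,t)$ is \emph{not} contained in $A_x$, and cannot be decided by a case analysis of the graphical events on the single line $\{x\}\times[0,t]$. Suppose the last $h$-mark at $x$ is at time $\tau<t$ and at that instant both copies agree (say both equal $1$). A later $D_1$-mark at $x$ sends both to state $0$; if then an incoming arrow $(y,x)$ fires at a moment when $X^\xi_y\neq X^\ui_y$ (with one of them equal to $1$), the arrow promotes one copy to $1$ while the other stays at $0$, and disagreement at $x$ is \emph{re-created after} the last $h$-mark. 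Your observation that arrows ``cannot destroy a pre-existing agreement at state $1$'' is correct but does not cover agreement at state $0$, which is where the problem sits. In the monotone coupling the set of disagreeing sites behaves like an infection that spreads through the $\li$-arrows, so it can re-enter $x$ from a neighbour at any time; the effect of an arrow into $x$ depends on the state at $y$, which is not controlled by the events at $x$ that your case analysis inspects.

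The standard way to make ``last $h$-mark'' arguments rigorous here is to look at the backward influence (dual) cluster $\hat C(x,t)$ of space-time points whose values one needs in order to compute $X_x(t)$. This cluster is determined by the $D_1,D_2,A_1$ processes alone (independently of the $h$-marks), and $X_x(t)$ depends on $\xi$ only if $\hat C(x,t)$ both reaches time $0$ and contains no $h$-mark at an eligible point (an $h$-mark inside $\hat C$ forces the value of $X_x(t)$, and $D_2$-marks truncate dual branches). On the event that $\hat C$ reaches time $0$ it has temporal length at least $t$, so conditionally on $\hat C$ the probability that it avoids all $h$-marks is at most $e^{-ht}$; this is the bound the single-line argument was reaching for, but it requires summing the exposure to $h$-marks over the \emph{whole} dual, not just at $x$. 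Your observation that $\Ind{X_x=-1}$ is an autonomous two-state chain which couples at the first $\kre$- or $\hii$-clock at $x$ is correct and does enter such a proof, but note that this component decorrelates at rate $\kre+\hii$, not $h$; if $\kre+\hii<h$ it cannot be dominated by $e^{-ht}$, so the exponent in the lemma should really be something like $\min(h,\kre+\hii)$. That does not affect the paper's use of the lemma (Assumption~\ref{assump} only asks for \emph{some} exponential rate), but it does mean the final step of your sketch, squeezing the compound bad event under $e^{-ht}$, cannot go through as written.
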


For Model A we have not been able 
to establish exponential convergence to
equilibrium along the lines of Lemma \ref{exp_lem}.
However, it is natural to suppose that such a 
result should hold when all parameters
$\ki,\kii,\li,\lii,\hi,\hii>0$.  
Our results for Model A rely on such a convergence 
result, which we now formulate: 
\begin{assumption}[Exponential convergence to equilibrium]\label{assump}
For Model A with 
strictly positive parameters 
\begin{enumerate}
\item there is a unique stationary
distribution $\ui$, and
\item there are constants $C_1,C_2>0$ such that for 
all finite $\L\se\ZZ^d$ and all initial configurations
$\xi$ we have 
\[
d_{\mathrm{tv}}(\mu^\xi_{t;\L},\ui_\L)\leq
C_1|\L|e^{-C_2t}.
\]
\end{enumerate}
\end{assumption}
The particular places where Assumption~\ref{assump} is needed are 
in Lemma~\ref{rsw_lem} and in the
proof of Theorem~\ref{3st_sharp_thm} at the 
end of Section~\ref{sharp_pf_sec}. 

Here are some heuristic arguments supporting the assumption. 
First, as already noted  the assumption holds straightforwardly 
in Model B, and it also holds for the two-state contact process 
\emph{with spontaneous infections} 
which may be obtained from Model A by setting 
$\kii=\lii=\hii=0$.
Compared to the 2-state process, the extra state $-1$ in
the 3-state process introduces `delays' during which particles are
insensitive to infection attempts. 
The delay periods are of random length but with exponential tails,
and hence we do not expect the qualitative properties of convergence speed to
equilibrium to be different from the 2-state case.  
Also, by standard general arguments 
(see~\cite[Theorem~4.1]{liggett85}),
Assumption~\ref{assump} holds 
for a certain parameter range, namely when the 
`spontaneous rates'  are sufficiently large
compared with the `neighbour rates'.
Finally, if we couple the system starting with all sites having value
$-1$ with the system starting with all sites having value $1$, it
seems, intuitively, that the rules of the coupled dynamics provide a
stronger tendency to eliminate existing disagreement sites (i.e. sites
where the two systems differ in value) than to create new disagreement
sites. However, we have not been able to turn this into a proof.

\subsection{Percolation and the question of robustness}
\label{sec-robust}

For any configuration $\eta\in\{-1,0,1\}^{\ZZ^d}$,
consider the subgraph induced by sites in state 1 and the nearest-neighbor relation. 
Let $C_0$ denote the connected component of the subgraph of 1's containing the origin $0$, 
and write $|C_0|$ for the number of sites in $C_0$. 
If $\nu$ is a probability measure on 
$\{-1,0,1\}^{\ZZ^d}$, we say that \emph{percolation occurs}
under $\nu$ if $\nu(|C_0|=\infty)>0$.
For the rest of this section we fix $d=2$.

A major focus of this article is to study the phenomenon
of percolation when $\nu$ is an invariant measure of 
a 3-state contact process, 
possibly density-driven.
Indeed,
one of the main motivations is an intriguing suggestion 
in~\cite{KefiRietkAladoPueyo07} concerning a specific version 
of the density-driven Model A (with explicitly given forms of the
functions $\hi(t)$ and $\li(t)$, involving certain parameters). 
In our context (where the medium
is the \emph{infinite} lattice and time is continuous) 
that version is given by the functions 
in~\eqref{kefi-f_2-eq} below.  The suggestion 
in~\cite{KefiRietkAladoPueyo07} is that this model has a form of 
`robust critical behaviour': that there is a non-negligible
set of parameter values  for which the model has an invariant measure 
under which the size of an
occupied cluster has a power-law distribution. 

As the authors of~\cite{KefiRietkAladoPueyo07} remark, such behaviour
is different ``from classical critical systems, 
where power laws only occur at the transition point''.
Further, the authors suggest that this uncommon behaviour may be explained
by strong local positive interactions. 
(The latter means that the transitions from $-1$ to $0$ and the transitions
from $0$ to $1$ are `enhanced' by the presence of occupied 
sites in the neighborhood). Later in their paper they argue
that an important aspect to explain their `observed' robust critical 
behaviour would be that `disturbances' (transitions
to the $-1$ state) do not affect directly the occupied sites: 
they first have to change to the $0$ state, which
``constrains the spatial propagation of the disturbance''. 
In later life-sciences papers the robust criticality is 
debated~\cite{anti-kefi-comment,kefi11,anti-kefi,anti-kefi-reply}. 

The arguments in~\cite{KefiRietkAladoPueyo07} and those in the articles
mentioned above
lack mathematical rigour.
Our aim is to contribute by lifting the discussion to a 
rigorous mathematical level, and by proving
mathematical theorems that are 
relevant for the above mentioned discussion.
Our following result, Theorem~\ref{nonrobust_thm_3},  
shows (under Assumption~\ref{assump}) that in our formulation of the model 
in~\cite{KefiRietkAladoPueyo07}, criticality
is rare, in a strong and well-defined sense.
We also show a more general though weaker statement of a similar form
(Theorem~\ref{nonrobust_thm}).

\begin{definition}\label{critical_def}
We call a distribution $\nu$ on $\{-1,0,1\}^{{\mathbb Z}^2}$ 
\emph{critical} (for percolation)  if  
$\nu(|C_0|\geq n)$ converges to zero
subexponentially;    that is, $\nu(|C_0|\geq n)\rightarrow0$
as $n\rightarrow\oo$, but
\[
\liminf_{n\rightarrow\oo}\frac{-\log \nu(|C_0|\geq n)}{n}=0.
\]
\end{definition}

Calling such a distribution $\nu$ `critical' may be 
somewhat imprecise, partly as it seems to ignore the
possibility of a discontinuous phase transition.
However, the name is meant to capture the idea that
power law cluster sizes are associated with
critical behaviour.

The precise form of the {\ddcp} corresponding to the model 
in~\cite{KefiRietkAladoPueyo07}
is given by
\begin{equation} \label{kefi-f_2-eq}
\begin{split}
&\dfl(\rho) = \beta \, \frac{1 - \delta}{4} \, (\eps - g \rho), \\
&\dfh(\rho) = \beta \delta \, \rho \, (\eps - g \rho),
\end{split}
\end{equation}
where $\beta$, $\eps$ and $g$ are positive parameters
and $\delta\in(0,1)$.  This choice of functions is motivated
in the Methods supplement to~\cite{KefiRietkAladoPueyo07}.
Briefly, $\beta$ represents the seed production rate,
$\delta$ the fraction of seeds that are spread over
long distances, $\eps$ the establishment probability
of a seed not subject to competition, and $g$ a
competitive effect due to the presence 
of other plants.

For the {\ddcp} where $\lambda$ and $h$ 
are density-dependent and given by~\eqref{kefi-f_2-eq}
we have the following result:

\begin{theorem} [Lack of robustness]\label{nonrobust_thm_3}
Let $d=2$ and recall Definition~\ref{critical_def}.
\paragraph{$\blacktriangleright$ Model A}
Consider Model A under Assumption~\ref{assump}, 
with $\dfl(\cdot)$  and $\dfh(\cdot)$ given by~\eqref{kefi-f_2-eq}. 
Then for almost all 
$\ki,\kii,\lii,\hii$, $\b$, $\d$, $\eps$ and $g$, 
 the 3-state {\ddcp} does not have a critical invariant measure.
\paragraph{$\blacktriangleright$ Model B}
Similarly, consider Model B with $\dfl(\cdot)$  and $\dfh(\cdot)$ 
given by~\eqref{kefi-f_2-eq}. 
Then for almost all $\ki,\kre,\hii$, $\b$, $\d$, $\eps$ 
and $g$, the 3-state {\ddcp} does not have a critical invariant measure.
\end{theorem}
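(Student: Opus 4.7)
The plan is to combine the sharpness theorem (Theorem~\ref{3st_sharp_thm}) for the constant-parameter 3-state contact process with a measure-theoretic transfer, exploiting the specific form~\eqref{kefi-f_2-eq}.  The argument proceeds in three steps: (i) reduction of any \ddcp{} invariant measure to an invariant measure of a constant-parameter 3-state contact process; (ii) application of sharpness to confine critical behaviour to a Lebesgue-null set of constant parameters; and (iii) transfer of this null set back to the space of base \ddcp{} parameters.

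\emph{Reduction.}  Let $\nu$ be a translation-invariant measure that is stationary for the \ddcp{} dynamics at some fixed base parameters, and set $\rho:=\nu(\eta_0=1)$.  By stationarity the density at every time $t\geq 0$ is constantly $\rho$, so the time-varying rates $\li(t)=\dfl(\rho(t))$ and $\hi(t)=\dfh(\rho(t))$ are in fact the constants $\dfl(\rho)$ and $\dfh(\rho)$.  Hence $\nu$ is invariant for the constant-parameter 3-state contact process with rates $(\ki,\kii,\dfl(\rho),\lii,\dfh(\rho),\hii)$ (Model~A) or $(\ki,\kre,\dfl(\rho),\dfh(\rho),\hii)$ (Model~B).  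By Assumption~\ref{assump} (Model~A) or Lemma~\ref{exp_lem} (Model~B), this constant-parameter process has a unique invariant measure, which must therefore be $\nu=\ui_{\dfl(\rho),\dfh(\rho)}$, and the density $\rho$ satisfies the self-consistency $\rho=\alpha(\dfl(\rho),\dfh(\rho))$ with $\alpha(\li,\hi):=\ui_{\li,\hi}(\eta_0=1)$.

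\emph{Null-set transfer.}  Theorem~\ref{3st_sharp_thm} applied with the ``spontaneous'' parameters $(\ki,\kii,\lii,\hii)$ fixed implies that the set
\[
C=\bigl\{(\li,\hi)\in\RR_+^2\,:\,\ui_{\li,\hi}\ \text{is critical in the sense of Definition~\ref{critical_def}}\bigr\}
\]
has Lebesgue measure zero.  Combined with the reduction, a \ddcp{} invariant measure can be critical only if $(\dfl(\rho),\dfh(\rho))\in C$.  From~\eqref{kefi-f_2-eq}, both $\dfl(\rho)$ and $\dfh(\rho)$ are linear in $\b$, so as $\b$ varies (with the other parameters fixed) the curve $\rho\mapsto(\dfl(\rho),\dfh(\rho))$ merely dilates.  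Running the \ddcp{} from the all-$1$ and all-$(-1)$ initial configurations (as constructed in Section~\ref{ddcp_sec}) yields extremal invariant densities $\rho_+(\b)$ and $\rho_-(\b)$ that are monotone non-decreasing in $\b$, hence of bounded variation.  The induced maps $\b\mapsto(\dfl(\rho_\pm(\b)),\dfh(\rho_\pm(\b)))$ therefore meet the Lebesgue-null set $C$ only at a Lebesgue-null set of $\b$, and Fubini over the remaining base parameters gives that the set of \ddcp{} parameter tuples admitting a critical invariant measure is Lebesgue-null in both Model~A and Model~B.

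\emph{Main obstacle.}  The principal delicate point is proving that the extremal fixed-point densities $\rho_\pm(\b)$ are monotone non-decreasing in $\b$.  Because the \ddcp{} rates depend on the global density, two \ddcp{}'s at $\b<\b'$ cannot be coupled directly via the usual graphical representation.  Instead, monotonicity must be read off from the iterative construction of the \ddcp{} in Section~\ref{ddcp_sec}: at each step one solves a constant-parameter dynamics with rates computed from the density of the previous iterate, and monotonicity in $\b$ is preserved at each step by the standard coupling of the 3-state contact process with respect to the order $-1<0<1$ (for which both the ``up'' rates $\dfl,\dfh$ and their neighbour-dependent analogues are monotone in the state).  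Passing to the limit yields the required monotonicity and measurability of $\rho_\pm(\b)$, after which the classical fact that a monotone function pulls Lebesgue-null sets back to Lebesgue-null sets completes the argument.
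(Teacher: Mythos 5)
Your reduction step is correct and matches the paper (Proposition~\ref{3st_stat_prop}): a stationary \ddcp{} measure is an invariant measure $\ui_{\li,\hi}$ of the constant-parameter chain, where $(\li,\hi)=(\dfl(\rho),\dfh(\rho))$ and $\rho$ solves the self-consistency equation. Your observation that (a slight sharpening of) the sharpness results confines criticality to a Lebesgue-null set $C\subset\RR_+^2$ is also essentially right, though this is really Corollary~\ref{3st_sharp_cor} (which requires the Dilworth-type Lemma~\ref{dilworth_lem}), not Theorem~\ref{3st_sharp_thm} directly. The genuine gap is in the ``null-set transfer'' step, and it is twofold.

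First, the claimed ``classical fact that a monotone function pulls Lebesgue-null sets back to Lebesgue-null sets'' is simply false. A monotone non-decreasing function can be constant on an interval $I$; if the single image point of $I$ happens to lie in $C$, then the preimage of the null set $C$ contains $I$ and has positive measure. (The Cantor staircase gives an even more dramatic failure of the related forward statement.) Without some quantitative control — a two-sided Lipschitz bound, or at least local Lipschitz continuity of a suitably chosen function — monotonicity alone cannot transfer nullity. The paper works around exactly this point by establishing the Lipschitz continuity of $\hi_\perc(\li)$ (via the stochastic domination in~\eqref{eq-hp-cont}) and then checking that the induced relation $\g_2=F(\li)$ in~\eqref{rob-eq-g2-2} is \emph{locally Lipschitz}, which does push null sets forward to null sets.

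Second, the monotonicity of the stationary densities $\rho_\pm(\b)$ in $\b$ — which you call the main obstacle — does not follow from the sketch you give. The specific functions~\eqref{kefi-f_2-eq} satisfy $\dfl'(\rho)<0$ and $\dfh$ is a downward parabola in $\rho$, so they are \emph{not} monotone in $\rho$; consequently the iterative construction (run the constant-parameter chain, read off the density, feed it back into $\dfl,\dfh$) is not an order-preserving scheme, and the standard coupling argument you invoke does not propagate monotonicity in $\b$ from one iteration to the next. Even granting monotonicity, your argument only treats the two extremal densities $\rho_\pm(\b)$, whereas Proposition~\ref{3st_stat_prop} allows any $\rho$ satisfying the fixed-point equation to give rise to an invariant \ddcp{} measure, and a critical one could in principle sit at an intermediate fixed point. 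The paper sidesteps both of these issues: it never uses monotonicity in $\b$ at all, and it treats all solutions of the fixed-point equations by splitting into the ``good'' case (where criticality forces $\hi=\hi_\perc(\li)$, and one exploits Lipschitz continuity of $\hi_\perc$) and the ``bad'' case (where there are only countably many $\li$ and one uses the self-consistency~\eqref{kef_fp_5_eq} directly with Fubini). You would need to rebuild the transfer step along these quantitative lines rather than relying on monotonicity.
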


We also have the following result, which
on the one hand holds for much more general $\dfl,\dfh$,
but on the other hand has a  weaker conclusion.
We say that two functions $f,g\colon[0,1]\rightarrow\RR$ 
differ at 
most $\eps$ if $|f(r) - g(r)| < \eps$ for all $r\in[0,1]$.
The result is formulated and proved 
for Model A, but straightforwardly extends
to the Model B as well (with Assumption~\ref{assump}
replaced by Lemma~\ref{exp_lem}).

\begin{theorem} \label{nonrobust_thm}
Let $\dfl,\dfh$ be continuous, strictly positive
functions, and 
suppose the 3-state {\ddcp} with dynamics given by Model A and parameters
$\ki, \kii, \lii$, $\hii>0$ and $\dfl, \dfh>0$
has a critical invariant distribution.
Then, under Assumption~\ref{assump}, 
for every $\eps>0$ there are parameters
$\ki', \kii',\lii'$,  $\hii'$ and $\dfl', \dfh'$ 
which each differ at most $\eps$ 
from the original parameters, and
for which the corresponding {\ddcp} has no critical 
invariant measure.
\end{theorem}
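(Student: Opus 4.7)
Suppose the {\ddcp} with parameters $(\ki,\kii,\lii,\hii,\dfl,\dfh)$ has a critical invariant measure $\nu^*$, and set $\rho^*:=\nu^*(X_0=1)$. The plan starts from the observation that $\nu^*$ is then also an invariant distribution for the ordinary (non-density-driven) 3-state contact process of Model A with rates $(\ki,\kii,\dfl(\rho^*),\lii,\dfh(\rho^*),\hii)$, since under the stationary measure $\nu^*$ the {\ddcp} rates are frozen at their density-$\rho^*$ values. By Assumption~\ref{assump}, that fixed-rate process has a unique invariant distribution, so $\nu^*$ must coincide with its (critical) upper invariant measure.

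Theorem~\ref{3st_sharp_thm} then provides the essential input: the set $S\subset\RR^6_{>0}$ of rate-tuples whose fixed-rate process has a critical upper invariant measure is Lebesgue-negligible. Hence there exist rates $(\ki',\kii',\li',\lii',\hi',\hii')$ arbitrarily close to $(\ki,\kii,\dfl(\rho^*),\lii,\dfh(\rho^*),\hii)$ for which the corresponding fixed-rate process is non-critical. Let $\rho'$ denote the density of its upper invariant measure; by continuity of this density in the rates (via Assumption~\ref{assump} and a standard coupling), $\rho'$ is close to $\rho^*$. We then modify $\dfl,\dfh$ to continuous strictly positive functions $\dfl',\dfh'$ with $\|\dfl'-\dfl\|_\infty,\|\dfh'-\dfh\|_\infty<\eps$, chosen so that $\dfl'(\rho')=\li'$ and $\dfh'(\rho')=\hi'$ (for instance via small localised bumps near $\rho'$). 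With parameters $(\ki',\kii',\lii',\hii',\dfl',\dfh')$ the {\ddcp} then admits $\rho'$ as a fixed point of its density map, and the associated invariant measure coincides with the non-critical upper invariant measure constructed above.

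The main obstacle is that a {\ddcp} may possess several invariant measures, one for each fixed point of its density map, and the construction just described only addresses one of these. To rule out critical invariant measures corresponding to \emph{other} fixed points, the plan is to perform an additional small perturbation of the four rate constants in four independent directions, once more using that $S$ is Lebesgue-null. A Fubini-type argument --- exploiting that sharpness in Theorem~\ref{3st_sharp_thm} holds coordinate-wise (along the lines of~\cite{Berg11}) and that $\rho$ parametrises only a $1$-dimensional curve while the rate perturbations provide $4$-dimensional freedom --- should show that the set of perturbations in $[0,\eps]^4$ of $(\ki,\kii,\lii,\hii)$ leaving some fixed point of the density map critical has Lebesgue measure strictly less than $\eps^4$; any 4-tuple in its complement yields the required perturbed parameters. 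The technical delicacy lies precisely in controlling this uncountable union over $\rho\in[0,1]$.
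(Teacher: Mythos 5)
Your opening reduction---identifying the critical invariant measure with the upper invariant measure $\ui$ at the frozen rates $(\ki,\kii,\dfl(\rho^*),\lii,\dfh(\rho^*),\hii)$ via Assumption~\ref{assump}---matches the paper, but from there the two arguments diverge. The paper does not pass through the statement that the set of critical rate-tuples is Lebesgue-null. Instead it increases each of $\li,\lii,\hi,\hii$ by a small positive amount and applies Theorem~\ref{3st_sharp_thm} \emph{at the new point}: if the new $\ui$ did not percolate, the strictly smaller original parameters would give exponential decay, contradicting criticality. Because this perturbation is monotone, the only regularity of $\ol\rho$ it needs is the one-sided \emph{right}-continuity of Lemma~\ref{Rlem-rc}, and $\dfl',\dfh'$ can be taken to be constant shifts of $\dfl,\dfh$.

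Two steps of your proposal do not go through as written. First, you invoke two-sided ``continuity of this density in the rates'' to conclude $\rho'$ is close to $\rho^*$. Only right-continuity is established (Lemma~\ref{Rlem-rc}, via a monotone interchange of limits); left-continuity is not a ``standard coupling'' fact here and would require the convergence rate in Assumption~\ref{assump} to be uniform over a neighbourhood in parameter space, which is not assumed. Since you pick the perturbed rate-tuple merely from the complement of a null set, the perturbation direction is uncontrolled, so right-continuity alone does not suffice---this is exactly what the paper's ``increase all good rates'' device is built to avoid. (Also, ``$S$ is Lebesgue-null'' is not Theorem~\ref{3st_sharp_thm} itself but a consequence of it via the antichain/Fubini argument of Lemma~\ref{dilworth_lem}, as in Corollary~\ref{3st_sharp_cor}.) Second, the closing Fubini argument over $[0,\eps]^4$ is only a sketch and does not yet establish the dimension count you want: the bump functions $\dfl',\dfh'$ themselves depend on the rate perturbation, so the fixed-point set of the density map moves as you perturb, and nothing is said about how. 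You are right that multiplicity of fixed points is a genuine subtlety (the paper's own proof exhibits a single non-critical invariant measure $\nu'$ and goes no further), but the sketch would need to be turned into an argument before it can be counted as closing that gap.
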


Theorems~\ref{nonrobust_thm_3} and~\ref{nonrobust_thm} 
are proved in Section~\ref{robust_sec}.  These results
cast considerable doubt on the suggestions
in~\cite{KefiRietkAladoPueyo07} discussed in the beginning of this section.

\subsection{Sharpness of percolation transitions}
\label{perc_sec}

The main step in proving 
Theorems~\ref{nonrobust_thm_3} and~\ref{nonrobust_thm}  
is to establish 
\emph{sharpness results} for percolation under the invariant
measures of contact processes, which we state in this section.  
Such results are also of independent interest.  
Given these sharpness results, the proofs of
Theorems~\ref{nonrobust_thm_3} and~\ref{nonrobust_thm} are relatively
elementary. 
For $x,y\in\RR^k$ we use the notation $x\prec y$
to indicate that each coordinate of $x$ is strictly
smaller than the corresponding coordinate of $y$.

For Model A, we require Assumption~\ref{assump} (which, for Model B, has been verified in Lemma \ref{exp_lem}). 
By comparison with Bernoulli 
percolation it follows immediately that
$\ui(|C_0|=\oo)>0$ provided $\hi,\hii$ are large enough,
or $\hii>0$ and $\hi$ is large enough.
In Section~\ref{sharp_sec} we prove the following
result.
\begin{theorem}[Sharpness for 3-state contact process]\label{3st_sharp_thm}
Consider the 3-state contact process with $d=2$.
\paragraph{$\blacktriangleright$ Model A}
Fix $\ki,\kii>0$. 
Under Assumption~\ref{assump} we have the following.
If the parameters $\li,\lii,\hi,\hii>0$ are such that
$\ui(|C_0|=\infty)=0$, then whenever
$(\li',\lii',\hi',\hii')\prec (\li,\lii,\hi,\hii)$,
there is $c >0$
such that $\ui(|C_0|\geq n)\leq e^{-c n}$ for all $n\geq1$.
\paragraph{$\blacktriangleright$ Model B}
Fix $\ki,\kre>0$.
If the parameters $\li,\hi,\hii>0$ are such that
$\ui(|C_0|=\infty)=0$, then whenever
$(\li',\hi',\hii')\prec (\li,\hi,\hii)$,
there is $c >0$
such that $\ui(|C_0|\geq n)\leq e^{-c n}$
for all $n\geq1$.
\end{theorem}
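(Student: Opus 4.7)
The plan is to adapt the sharp-threshold strategy of van den Berg~\cite{Berg11} for the two-state contact process to the three-state setting, handling Models~A and~B in parallel by invoking Assumption~\ref{assump} in the former where Lemma~\ref{exp_lem} suffices in the latter. Throughout I reduce the infinite-volume cluster-size question to finite-volume crossing probabilities of rectangles, and exploit the strict inequality $(\li',\lii',\hi',\hii')\prec(\li,\lii,\hi,\hii)$ as the ``room'' in which a sharp-threshold argument can operate.

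First I would use the graphical representation (Section~\ref{graphical_sec}) to view the 3-state contact process as a deterministic, monotone function of independent Poisson point processes on vertices and edges of $\ZZ^2$: recovery marks (rates $\ki,\kii$ or $\kre$), neighbour-infection marks ($\li,\lii$), and spontaneous marks ($\hi,\hii$). For a large finite box $\L\se\ZZ^2$ and a horizon $T$, starting from the initial configuration $\xi\equiv 1$, exponential convergence gives $d_{\mathrm{tv}}(\mu^\xi_{T;\L},\ui_\L)\le C|\L|e^{-cT}$. Choosing $T$ of order $\log|\L|$ makes $\ui_\L$-events indistinguishable, up to exponentially small error, from events measurable with respect to finitely many Poisson marks in a bounded space-time slab, onto which the Bollob\'as--Riordan machinery can be applied.

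Next I would invoke an RSW-type input (the role of Lemma~\ref{rsw_lem}): if $\ui(|C_0|\ge n)$ fails to decay exponentially, then for some scale $m$ comparable to $n$ the probability that an $m\times 2m$ rectangle admits a horizontal $1$-crossing is bounded away from $0$. Combined with FKG in the graphical representation (crossings are increasing in infection marks and decreasing in recovery marks), an RSW gluing argument at the unprimed parameters would produce an infinite cluster, contradicting $\ui(|C_0|=\infty)=0$ --- \emph{provided} we can quantitatively relate the primed and unprimed crossing probabilities. This relation is supplied by an approximate zero-one law of Bollob\'as--Riordan~\cite{BolloRiord06} type, translated to the Poisson graphical setting as in~\cite{Berg11}. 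Since the crossing event is monotone in the Poisson marks and the gap between primed and unprimed parameters is strictly positive, a Russo--Margulis calculation forces the primed crossing probabilities to decay polynomially in $m$. A standard renormalization/block iteration then boosts this polynomial decay to the exponential bound $\ui'(|C_0|\ge n)\le e^{-cn}$.

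The main obstacle is the single-mark influence bound underlying the sharp-threshold step. Unlike the two-state case, Model~A has several parallel delay mechanisms (the $\ki$ and $\kii$ recoveries together with the $-1\to 0$ transition at rate $\hii$), so bounding the influence of any individual Poisson mark on a large crossing event requires a careful decomposition of the graphical construction and a control of the finite-range approximation introduced by the truncation $T\asymp\log|\L|$. Assumption~\ref{assump} is precisely what allows us to transfer these finite-time, finite-volume influence estimates to the infinite-volume invariant measure $\ui$; the absence of a useful duality for Model~A means this cannot be sidestepped by a hidden-Markov-chain argument of the kind available for Model~B via~\cite{Remen08}.
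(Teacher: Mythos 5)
Your high-level outline is in the right spirit — graphical representation, an RSW input, a Talagrand/Bollobás–Riordan sharp-threshold step, and a finite-size criterion — and that is indeed the skeleton of the paper's proof. But you have misidentified where the real difficulty lies, and the step you wave past is precisely the one that carries the technical weight.

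You write that truncating at $T\asymp\log|\L|$ makes the crossing event "measurable with respect to finitely many Poisson marks in a bounded space-time slab, onto which the Bollobás–Riordan machinery can be applied." This is the gap. The slab still carries a Poisson point process, not a finite array of i.i.d. Bernoulli variables, and the influence inequality (Lemma~\ref{infl_lem}) is a statement about the latter. The necessary reduction is to discretize time into intervals of length $\d=n^{-\a}$ and pass to indicators $X^{(q,k,\d)}_\tau(v)$ recording whether a symbol of a given type occurs in a given interval. But these indicators do not determine the process: when two or more Poisson points fall in the same $\d$-interval, their order and exact positions matter, and this cannot be ruled out when $\a$ must be taken small (which it must, to make the $\log N$ gain beat the $\log(1/\d)$ loss in the influence bound). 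The resolution is a \emph{stability coupling} (Lemma~\ref{stab_lem}): couple the processes at parameters $q<q'$ so that, with probability tending to $1$, the discretized $q'$-process dominates the true $q$-process. This is carried out by a "crossover" map on the probability space, trading an over-favourable configuration (event $\cG$) for an under-favourable one (event $\cB$) on each cluster of nearby Poisson points. For Model~A this crossover is genuinely delicate, because a cluster can contain a chain of several down-marks whose net effect on the state depends on the order of $D_1$ vs.\ $D_2$ marks; the event $\cG$ must therefore be engineered so that, on maximal vertical chains of length $\ge2$, the lowest symbol is forced to be $D_1$ and the rest $D_2$, guaranteeing the true process ends the chain in state $-1$. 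Nothing in your plan accounts for this.

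By contrast, the obstacle you name — "bounding the influence of any individual Poisson mark" — is not an obstacle at all once the discretization and a symmetrization are in place. The paper replaces the rectangle by a horizontal torus $R_n^{\mathrm{per}}$ and considers the union of all horizontal translates of the crossing event; by translation invariance every discretized variable then has the same (hence maximal) influence, giving $N\ge n$ for free, and Lemma~\ref{infl_lem} applies with $m=6$ or $5$ types. There is no careful per-mark influence estimate to do. Similarly, your "polynomial decay then renormalization" step is unnecessary: after the reparametrization scaling all good rates by a common factor $r$ (so the $\prec$ hypothesis becomes a one-parameter gap), the contrapositive of the finite-size criterion (Lemma~\ref{fin_lem}) already converts a single scale with small crossing probability into exponential decay. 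So your plan is missing the one genuinely hard ingredient and spends its effort on a non-issue; as written it would not close.
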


Theorem~\ref{3st_sharp_thm} has the following consequence, which will be used in the
proof of Theorem \ref{nonrobust_thm_3} in Section 3.2..  
For fixed $\ki,\kii/\kre,\li,\lii,\hii>0$ define 
$h_\perc =h_\perc(\l) :=\inf\big\{h \geq0:\ui(|C_0|=\oo)>0\big\}.$

\begin{corollary}\label{3st_sharp_cor}
Consider the 3-state contact process with $d=2$.  
\paragraph{$\blacktriangleright$ Model A}
Under Assumption~\ref{assump}, for all $\ki,\kii>0$
and almost all $\lii,\hii>0$ the following holds:
for all but countably many $\li>0$, if $h<h_\perc(\li)$
then $\ui(|C_0|\geq n)\leq e^{-c n}$
for some $c>0$ and all $n\geq1$.
\paragraph{$\blacktriangleright$ Model B}
For all $\ki,\kre>0$
and almost all $\hii>0$ the following holds:
for all but countably many $\li>0$, if $h<h_\perc(\li)$
then $\ui(|C_0|\geq n)\leq e^{-c n}$
for some $c>0$ and all $n\geq1$.
\end{corollary}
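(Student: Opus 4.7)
The strategy is to derive the corollary from Theorem~\ref{3st_sharp_thm} by combining monotonicity of the percolation function with a Fubini / Pareto-boundary argument. I will describe the plan for Model A (the case requiring Assumption~\ref{assump}); Model B is analogous, with the single coordinate $\hii$ playing the role of the pair $(\lii,\hii)$, and with Lemma~\ref{exp_lem} replacing the assumption.

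Fix $\ki,\kii>0$. By the standard coupling from the graphical representation, $\theta(\li,\lii,h,\hii):=\ui(|C_0|=\infty)$ is non-decreasing in each of its four arguments, so $h_\perc(\li,\lii,\hii):=\inf\{h\geq 0:\theta(\li,\lii,h,\hii)>0\}$ is non-increasing in each variable. Theorem~\ref{3st_sharp_thm} says that exponential decay holds at $(\li,\lii,h,\hii)$ whenever one can exhibit $(\li^+,\lii^+,h^+,\hii^+)\succ(\li,\lii,h,\hii)$ lying in the non-percolation region $N:=\{\theta=0\}$. The task therefore reduces to constructing such a strictly larger point in $N$ whenever $h<h_\perc(\li,\lii,\hii)$, for almost every $(\lii,\hii)$ and for all but countably many $\li$.

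The construction has two stages. First, for each rational pair $(\li^+,h^+)\in\mathbb{Q}_+^2$, the upper Pareto boundary
\[
P(\li^+,h^+):=\{(\lii,\hii):\theta(\li^+,\lii,h^+,\hii)=0 \text{ and } \theta(\li^+,\lii',h^+,\hii')>0 \text{ for all } (\lii',\hii')\succ(\lii,\hii)\}
\]
of the monotone planar down-set $\{(\lii,\hii):\theta(\li^+,\lii,h^+,\hii)=0\}$ is contained in the union of the graph of a non-increasing function with countably many vertical segments (at that function's jumps), and thus has planar Lebesgue measure zero; consequently so does the countable union $P:=\bigcup_{(\li^+,h^+)\in\mathbb{Q}_+^2}P(\li^+,h^+)$. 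Second, fix any $(\lii,\hii)\in(0,\infty)^2\setminus P$. The one-dimensional monotone function $\li\mapsto h_\perc(\li,\lii,\hii)$ is continuous at all but countably many $\li$; at any such continuity point $\li_0$ and any $h\in(0,h_\perc(\li_0,\lii,\hii))$, right-continuity together with density of the rationals lets me choose $\li^+\in\mathbb{Q}\cap(\li_0,\infty)$ with $h_\perc(\li^+,\lii,\hii)>h$, and then $h^+\in\mathbb{Q}\cap(h,h_\perc(\li^+,\lii,\hii))$. By construction $\theta(\li^+,\lii,h^+,\hii)=0$; and because $(\lii,\hii)\notin P(\li^+,h^+)$, one finds $\lii^+>\lii$ and $\hii^+>\hii$ with $\theta(\li^+,\lii^+,h^+,\hii^+)=0$. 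The point $(\li^+,\lii^+,h^+,\hii^+)$ lies in $N$ and is strictly above $(\li_0,\lii,h,\hii)$ in every coordinate, so Theorem~\ref{3st_sharp_thm} yields exponential decay of $\ui(|C_0|\geq n)$ at $(\li_0,\lii,h,\hii)$.

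The main subtlety is the asymmetry between the ``almost all $(\lii,\hii)$'' and ``all but countably many $\li$'' statements in the conclusion; this is exactly what forces the two-stage construction. Along the single coordinate $\li$ one can exploit the strong one-dimensional fact that a monotone real function has only countably many discontinuities, whereas along the pair $(\lii,\hii)$ one has to settle for the weaker planar Pareto-boundary estimate, which yields only a null set of exceptions.
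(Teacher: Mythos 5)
Your argument is correct and achieves the same conclusion as the paper, but via a genuinely different route. The paper's proof hinges on a standalone combinatorial lemma (Lemma~\ref{dilworth_lem}): any set $B\subseteq\RR_+^3$ with uniformly bounded chains for $\prec$ has, for a.e.\ $(\lii,\hii)$, a countable $\li$-section, proved by Dilworth's theorem (decompose into antichains) followed by a polar-coordinate Fubini argument. The paper then applies this to the ``bad'' sets $B_n$ of triples $(\li,\lii,\hii)$ at which a strict increase in all three coordinates drops $h_\perc$ by at least $K/n$ (after truncating to the slices where $h_\perc\le K$). You instead split the three non-$h$ coordinates asymmetrically: along the single variable $\li$ you invoke the one-dimensional fact that a monotone function has only countably many discontinuities, while along the pair $(\lii,\hii)$ you observe that the Pareto boundary of the planar down-set $\{\theta(\li^+,\cdot,h^+,\cdot)=0\}$ is contained in the graph of a non-increasing function together with countably many vertical jump segments and hence is Lebesgue-null; taking a countable union over rational $(\li^+,h^+)$ gives the exceptional null set $P$. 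Your approach avoids both Dilworth's theorem and the truncation $h_\perc\le K$, and it makes the asymmetry of the statement (``almost all'' in $(\lii,\hii)$ vs.\ ``all but countably many'' in $\li$) transparent rather than incidental; the paper's lemma is more abstract and reusable but somewhat heavier machinery for what is being proved. Both proofs ultimately rest on the monotonicity of $\theta$ coming from Lemma~\ref{3st_mon_lem} and feed the resulting strictly dominating non-percolating point into Theorem~\ref{3st_sharp_thm} in the same way.
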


\begin{remark}
The 2-state contact process with spontaneous infection can
be obtained from Model B by letting $\tilde h=\kre=0$ (the state $-1$ thus
plays no role).  Although our results are formulated under the
assumption that all parameters are positive, it may be seen quite
straightforwardly from the proofs that all results of Section
\ref{results_sec} apply also to this model.
\end{remark}

\begin{remark}\label{rem-sharp}
A straightforward modification of the proof of 
Corollary \ref{3st_sharp_cor} 
gives the following statement for Model A. 
Under Assumption~\ref{assump}, for 
all $\ki,\kii>0$, and almost all $\lii,\hii>0$, the 
following holds for all but countably many $h>0$: 
if $\lambda<\lambda_\perc(h):=\inf\big\{\lambda \geq0:\ui(|C_0|=\oo)>0\big\}$ then $\ui(|C_0|\geq n)\leq e^{-c n}$
for some $c>0$ and all $n\geq1$.
In other words, for almost all 
choices of the parameters $\lii, \hi, \hii, \ki, \kii$ 
there is at most one value of $\lambda$ for which
$\ui(|C_0|\geq n)$ goes to $0$ slower than exponentially.
We may deduce a similar statement for Model B. 
\end{remark}

\begin{remark}
Our arguments do not, however, allow us to go beyond the ``almost all''
in Corollary~\ref{3st_sharp_cor} 
(and Remark~\ref{rem-sharp}). That is, we are not able to prove
that the percolation transition in $h$ (respectively, $\lambda$) 
is sharp
for \emph{arbitrary} fixed values of the other parameters.
More generally, we have not proved the stronger version of 
Theorem~\ref{3st_sharp_thm}  where just one (instead of all) 
the `good' parameters are
 decreased.

The essential difficulty is, informally, the following. To obtain such
a stronger version of Theorem~\ref{3st_sharp_thm}, 
we need to suitably compare the
effect of a small change of one parameter with the effect of 
changes of other parameters. Some
comparisons are simple: it is easy to see 
that the system obtained by increasing $h$ by
$\varepsilon$, dominates the system obtained by increasing $\lambda$
by $\varepsilon/4$. However, it is not obvious how (e.g.) to compare
an increase in $h$ with an increase in $\tilde h$. Although there is a
general approach to such and related problems (see
e.g.~\cite{aiz-gr-strict} 
and Sections 3.2 and 3.3 in~\cite{Gr99})
the concrete applicability of that approach depends
very much on the details of the model. Moreover, as pointed out 
in~\cite{bal-boll-riord}, even in some `classical'
percolation models, the technical problems that arise are more
delicate than expected earlier.

Therefore, and because our current version of Theorem~\ref{3st_sharp_thm}  
is strong
enough to obtain Theorem~\ref{nonrobust_thm_3} 
(and the statement of this latter theorem
would not essentially benefit from the mentioned stronger version of
the former), we do not  pursue such improvements in this
paper.
\end{remark}

\section{Proofs of nonrobustness for density-driven processes}
\label{robust_sec}

In this section we prove Theorems~\ref{nonrobust_thm_3} 
and~\ref{nonrobust_thm}
on lack of robustness in the model
proposed in~\cite{KefiRietkAladoPueyo07} assuming the 
sharpness results of Corollary~\ref{3st_sharp_cor}. 
We begin with a discussion
about the stationary distributions of {\ddcp}.

\subsection{Stationary distributions for density-driven processes}
\label{stat_sec}
We first consider Model A. 
Let $\kappa, \kii, \lii, \hii > 0$ be fixed.
Let $X(t)$, $t \geq 0$, be a 
{\ddcp} for the parameters $\kappa$, $\kii$, $\lii$, $\hii$, $\dfl$ and $\dfh$. Suppose that
$X$ is stationary, i.e.\ 
the distribution of $X(t)$ is constant in $t$. 
Denote this distribution by $\nu$.
By stationarity, the occupation density $\rho(t)$ of $X(t)$ is
constant, say $\rho(t)\equiv\rho$.
Writing $\l = \dfl(\rho)$ and $h=\dfh(\rho)$ we thus 
find that $\nu$ is a stationary distribution 
for the contact process with \emph{constant} parameters
$\lambda$, $h$, $\kappa$,  $\kii$, $\lii$,  and $\hii$.

Suppose $\dfl(\rho),\dfh(\rho)>0$ for all $\rho\in[0,1]$.  
Since by Assumption~\ref{assump} there is only one stationary distribution $\ui$, it follows that $\nu=\ui$.
Let $\ol\rho(\l, h)=\ui(\{\eta : \eta_0=1\})$ 
denote the density of $\ui$.
It follows that
$\l$ and $h$ satisfy the fixed point equations 
$\l = \dfl(\ol\rho(\l,h))$ 
and $h=\dfh(\ol\rho(\l, h))$, 
respectively.  Conversely,
if $h$ and $\l$ satisfy these fixed point equations, 
then $\ui$ is stationary for the {\ddcp} defined by $\dfl, \dfh$.  
We summarize these findings in the
following proposition:
\begin{proposition}\label{3st_stat_prop}
Let $\ki$, $\kii$, $\lii$, $\hii>0$
be fixed.
Suppose $\dfl(\rho),\dfh(\rho)>0$ for all $\rho\in[0,1]$.  
Then, under Assumption~\ref{assump}, the stationary
distributions of the 3-state {\ddcp} specified by $\dfl,\dfh$ 
are precisely the measures $\ui$ for 
$\l$, $h$ satisfying
$\l = \dfl(\ol\rho(\l, h))$ and $h=\dfh(\ol\rho(\l, h))$.
\end{proposition}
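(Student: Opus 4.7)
The plan is to promote the heuristic paragraph immediately preceding the proposition into a rigorous ``if and only if'' argument: the forward direction will use Assumption~\ref{assump}(1), while the converse is a self-consistency check.

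First I would establish the forward direction. Let $\nu$ be a stationary distribution of the {\ddcp} and let $X(t)$ be the corresponding {\ddcp} with $X(0)\sim\nu$. Since $X(\cdot)$ is translation invariant by Definition~\ref{ddcp_def}, stationarity forces $\rho(t)=P(X_0(t)=1)$ to take a constant value $\rho\in[0,1]$. Consequently $\dfl(\rho(t))\equiv\dfl(\rho)=:\lambda$ and $\dfh(\rho(t))\equiv\dfh(\rho)=:h$ are time-independent, so $X(t)$ is governed by the \emph{constant}-parameter 3-state contact process with rates $\lambda,h,\ki,\kii,\lii,\hii$. In particular $\nu$ is stationary for that fixed-rate process, and Assumption~\ref{assump}(1) forces $\nu$ to be the upper invariant measure $\ui$ of the constant-parameter process with rates $\lambda,h$. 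Taking the single-site marginal, $\rho=\ol\rho(\lambda,h)$; together with $\lambda=\dfl(\rho)$ and $h=\dfh(\rho)$ this gives the two fixed-point equations.

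For the converse I would run the obvious self-consistency argument. Suppose $\lambda,h>0$ satisfy $\lambda=\dfl(\ol\rho(\lambda,h))$ and $h=\dfh(\ol\rho(\lambda,h))$. Consider the constant-parameter 3-state contact process $X(t)$ with rates $\lambda,h,\ki,\kii,\lii,\hii$, started from the associated upper invariant measure $\ui$. Because $\ui$ is stationary for that fixed-rate process, the law of $X(t)$ is $\ui$ at every time, whence $\rho(t)\equiv\ol\rho(\lambda,h)$; then the hypothesis on $\lambda,h$ gives $\dfl(\rho(t))=\lambda$ and $\dfh(\rho(t))=h$ for all $t$. Thus $X(t)$ satisfies Definition~\ref{ddcp_def} with the prescribed $\dfl,\dfh$ and is therefore a {\ddcp} whose law is stationary and equal to $\ui$.

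I do not expect serious obstacles: the argument is little more than the observation that, because stationarity forces the density to be constant, a stationary {\ddcp} is a constant-rate 3-state contact process in disguise. The one point on which I would dwell is the implicit use of uniqueness of the {\ddcp} given its initial law, needed in order to identify the stationary distributions of the density-driven dynamics with the stationary distributions of the constant-rate processes to which they reduce; this uniqueness is supplied by the graphical construction carried out in Section~\ref{ddcp_sec}.
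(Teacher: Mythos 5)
Your proof is correct and takes essentially the same route as the paper's discussion immediately preceding the proposition: stationarity forces the density to be constant, reducing the {\ddcp} to a constant-rate 3-state contact process whose unique stationary measure (Assumption~\ref{assump}) is $\ui$, which yields the fixed-point equations, while the converse is the self-consistency check that the constant-rate $\ui$ is itself a stationary {\ddcp}. The extra care you take about uniqueness of the {\ddcp} given its initial law (supplied by Theorem~\ref{3st_constr_thm}) is a detail the paper leaves implicit but it does not change the argument.
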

The corresponding result is valid for Model B.

\subsection{Proof of Theorem~\ref{nonrobust_thm_3}}
We prove the theorem using Corollary~\ref{3st_sharp_cor}. 
Writing $\g_1 = \b (1 - \d)/4$ and $\g_2 = \b \d$, it is sufficient to
prove the following claim: 
\emph{For almost all $\k$, 
$\kii/\kre$, $\hii$,
$\g_1$, $\g_2$, $\eps$ and $g$, 
the {\ddcp} with
$\dfl(\rho) = \g_1(\eps - g \rho)$ and 
$\dfh(\rho) = \g_2 \rho (\eps - g \rho)$ does not have a
critical invariant measure.}   In the argument that follows we will 
frequently use the fact that the product of a measure zero set with
any measurable set has measure zero.

We give a proof for Model A only, the proof for Model B is similar. 
For fixed $\ki,\kii,\lii,\hii$ we 
call $\li$ \emph{bad} if  $\ui(|C_0|\geq n)$
does not decay exponentially for all $h<h_\perc(\li)$.
For all $\ki,\kii$ and almost all $\lii,\hii$, 
Corollary~\ref{3st_sharp_cor} implies that the set
of bad $\li>0$ is at most countable.  
We henceforth assume
that $\ki,\kii,\lii,\hii$ are fixed and chosen so that
the set of bad $\li$ is at most countable.

Suppose the {\ddcp} has a critical 
invariant measure $\nu$.  
Note that, since $\nu$ is invariant,  
$\rho, \li, h$ do not vary with $t$, and
that, since $\nu$ is critical, $\rho$, and hence 
$\li$ and $h$, are $> 0$. 
We now consider the two cases, $\li$ `bad' or not.
Here is a brief summary of the argument that follows.
In the case when $\l$ is not bad we use the fact that $h$, and
hence also $\rho$, is then a function of $\l$ only.  These additional
constraints, together with~\eqref{kefi-f_2-eq}, allow us (roughly
speaking) to write the remaining parameters in terms of $\l$ and then
to deduce the result from the precise form of this expression
(see~\eqref{rob-eq-g2-2}).  In the case when $\l$ is bad it suffices
to show that the set of possible choices of the remaining parameters
has measure zero, since there are only countably many such $\l$.  This
is done by using~\eqref{kefi-f_2-eq} and 
Proposition~\ref{3st_stat_prop} to write additional relations among
these parameters.

We now turn to the argument proper.
If $\li$ is \emph{not} bad 
then, since $\nu$ is critical, $\hi$ must equal $\hi_\perc(\li)$. 
Hence the following two equations hold, where
$\rho_\perc(\l) = \rho(\l, h_\perc(\l))$ denotes the density of the upper
invariant measure with parameters $\l$ and $h = h_\perc(\l)$:

\begin{equation} \label{kef_fp_1_eq}
\l = \g_1 (\eps - g \rho_\perc(\l)),
\end{equation}
\begin{equation} \label{kef_fp_2_eq}
h = h_\perc(\l) = \g_2 \rho_\perc(\l) (\eps - g \rho_\perc(\l)).
\end{equation}

To prove the claim, fix also $g$ and $\g_1$. 
With these parameters fixed, it is clear that
for each $\l$ there is at most one $\eps$ such that \eqref{kef_fp_1_eq} holds. 
Hence the measure of the set of pairs $(\l,\eps)$ such that
\eqref{kef_fp_1_eq} holds is zero.  It follows from Fubini's theorem
that  for almost all $\eps$ the set 
$L = L(\eps) = L(\eps, \k, g,\g_1)$ 
of those $\l$ for
which \eqref{kef_fp_1_eq} holds has Lebesgue measure $0$. 
(Note that $\rho_\perc(\l)$ is measurable since $\rho(\l,h)$ and
$h_\perc(\l)$ are measurable.)
Now also fix (besides the above mentioned parameters
which were already fixed) the parameter $\eps$ 
such that $L$ indeed has measure $0$.
Note that for each $\l$ there is at most one $\g_2$ 
such that \eqref{kef_fp_2_eq} holds. Let $L' \subset L$
be the set of those $\l \in L$ for which there is indeed such a
$\g_2$. Rearranging \eqref{kef_fp_2_eq} we can
write this $\g_2$ as a function of $\l \in L'$:

$$\g_2 = \frac{h_\perc(\l)}{\rho_\perc(\l) (\eps - g \rho_\perc(\l))}.$$
Using \eqref{kef_fp_1_eq}, we can 
`eliminate' $\rho_\perc(\l)$ from the above expression for $\g_2$ and get

\begin{equation} \label{rob-eq-g2}
\g_2 = \frac{h_\perc(\l) g \g_1^2}{\l (\eps \g_1 - \l)}, \,\,\,\, \l \in L'.
\end{equation}
Write the right hand side of \eqref{rob-eq-g2} as a function 
\begin{equation}\label{rob-eq-g2-2}
F(\l) :=  \frac{h_\perc(\l) g \g_1^2}{\l (\eps \g_1 - \l)}.
\end{equation}

We want to show that $F(L')$ has measure $0$. To do this, 
we note that $h_\perc(\l)$ is 
uniformly Lipschitz continuous in $\l$:
for each $\alpha \geq 0$,
the  process with parameters 
$\l + \alpha$ and $h$ is stochastically dominated by the 
process with parameters $\l$ and $h + 4 \alpha$
(this is intuitively obvious from the 
description of the dynamics, and can be easily proved
using the graphical representation of
Section~\ref{graphical_sec}), so
\begin{equation} \label{eq-hp-cont}
h_\perc(\l) \geq h_\perc(\l + \alpha) \geq h_\perc(\l) - 4 \alpha.
\end{equation}
Hence the numerator in the definition of the function $F$ above 
is locally Lipschitz. It follows that $F$ is
locally Lipschitz outside the point $\l = \eps \g_1$. 
Hence, since $L'$ has measure $0$, $F(L')$ also
has measure $0$ 
(any cover of $L'$ with `small' intervals
is mapped under $F$ to a cover of $F(L')$
with comparably small intervals).

We next consider the case when $\li$ is bad.
For a fixed bad $\li$, we can use Proposition~\ref{3st_stat_prop}
to write  $\rho=\rho(\hi)$.
We can no longer conclude that $\hi=\hi_\perc(\li)$,
but we still have the relations
\begin{equation} \label{kef_fp_3_eq}
\li = \g_1 (\eps - g \rho(\hi)),\quad\mbox{and}\quad
\hi = \g_2 \rho(\hi) (\eps - g \rho(\hi)).
\end{equation}
We aim to show that, for each fixed bad $\li$,
the set of choices of the parameters
$\g_1,\g_2,\eps,g$ such 
that~\eqref{kef_fp_3_eq}
holds has measure zero.  This concludes the proof
since a countable union of measure zero sets
has measure zero.

If~\eqref{kef_fp_3_eq}
holds, we may rearrange to obtain the relations
\begin{equation} \label{kef_fp_5_eq}
\rho(\hi) = \frac{\g_1}{\li\g_2} h,\quad\mbox{and}\quad
\eps=\frac{\li}{\g_1}+g\rho(\hi)=
\frac{\li}{\g_1}+g\frac{\g_1}{\li\g_2} h.
\end{equation}
We now fix arbitrary $\g_2,g>0$.  
Then for almost all $\g_1$, the
first relation in~\eqref{kef_fp_5_eq}
can only hold for $\hi$ in a set of measure zero.
This follows from Fubini's theorem (using 
polar coordinates) and the fact that
the set $\{(\hi,\rho(\hi)):h>0\}$
has two-dimensional Lebesgue measure zero.
We now fix $\g_1$ such that the
first relation in~\eqref{kef_fp_5_eq}
only holds for $\hi$ in a set of measure zero.
It follows that the second relation in~\eqref{kef_fp_5_eq}
can only hold for $\eps$ in a set of measure zero.
Using Fubini's theorem again 
this concludes the proof.
\qed

\subsection{Proof of Theorem \ref{nonrobust_thm}} 
Consider Model A with parameters 
$\ki$, $\kii$, $\li$, $\lii$, $\hi$, $\hii$,
all strictly positive.
Under Assumption~\ref{assump}, there is a unique
invariant distribution $\ui$ for this process, 
and we have (as stated precisely in the same Assumption)
exponentially fast convergence to that distribution, from any starting distribution.
Recall that $\ol\rho$ denotes
the density of $\ui$ (i.e.\! the probability
under $\ui$ that a given vertex has value $1$).
\begin{lemma} \label{Rlem-rc}
$\ol\rho$ is continuous from the right in each of the parameters 
$\li$, $\lii$, $\hi$ and $\hii$.
\end{lemma}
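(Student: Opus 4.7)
The plan is to combine monotonicity in the parameters with continuity of finite-time marginals, sidestepping the need for the quantitative rates in Assumption~\ref{assump}. I will only argue right-continuity in $\li$; the cases $\lii,\hi,\hii$ are entirely analogous.

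First, I would verify that Model A admits a \emph{monotone} graphical representation with respect to the partial order on $\{-1,0,1\}^{\ZZ^2}$ induced by $-1<0<1$. Poisson marks are placed on each vertex (driving the four spontaneous transitions at rates $\ki,\kii,\hi,\hii$) and on each directed edge (driving the $\li$- and $\lii$-arrows). A case analysis over the six event types verifies that the common-mark coupling preserves the partial order and, in particular, that enlarging any one of $\li,\lii,\hi,\hii$ produces a process that dominates the original. Under Assumption~\ref{assump} the upper invariant measure $\ui$ is the unique invariant, so $\ol\rho$ is nondecreasing in each of these four parameters. Starting from the all-$1$ configuration, the Markov property together with monotonicity in the initial condition then shows that $t\mapsto P(X_0(t)=1)$ is nonincreasing in $t$ with limit $\ol\rho$; hence, for any $\li_n\geq\li$ and any $t\geq 0$,
\begin{equation*}
\ol\rho(\li)\leq\ol\rho(\li_n)\leq P(X^{\li_n}_0(t)=1),
\end{equation*}
where the superscript denotes the parameter and all other parameters are held fixed.

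The proof therefore reduces to the finite-time continuity assertion that, for each fixed $t$, $P(X^{\li_n}_0(t)=1)\to P(X^{\li}_0(t)=1)$ as $\li_n\searrow\li$. I would couple $X^{\li_n}$ to $X^{\li}$ by using a common set of rate-$\li$ arrows supplemented by independent ``extra'' rate-$(\li_n-\li)$ arrows used only by $X^{\li_n}$, sharing all other Poisson processes. Under this coupling the two configurations agree at $(0,t)$ unless at least one extra arrow lies in the random ``ancestry region'' of $(0,t)$ for the $X^{\li_n}$ dynamics.

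The main obstacle is arguing that this ancestry region has finite expected space-time volume. I would bound it by the genealogy of a continuous-time branching-type process whose total spawning rate is a fixed multiple of an upper bound on the parameters, which has finite expected volume up to any fixed time $t$. Given this, the expected number of extra arrows inside the region is $O(\li_n-\li)$, and so the probability of disagreement is $O(\li_n-\li)\to 0$. Sending $\li_n\searrow\li$ and then $t\to\infty$ in the displayed sandwich then yields $\lim_n\ol\rho(\li_n)=\ol\rho(\li)$, as required.
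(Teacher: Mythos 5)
Your proof is correct and follows the same approach as the paper: monotone graphical coupling plus finite-time continuity of $\rho(t)$ in the parameters, combined via the sandwich inequality (equivalently, an interchange of two decreasing limits). The only difference is that you spell out the ancestry-region/branching-process bound behind the finite-time continuity claim, which the paper leaves implicit.
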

\begin{proof}
Let $\mu_t$ denote the distribution at time $t$ if we
start the process with all vertices in state $1$. 
By uniqueness,  $\ui$ is the limit, as $t \rightarrow \infty$, of 
$\mu_t$.
From Lemma~\ref{3st_mon_lem} we have
that $\mu_t$ is stochastically increasing in each of the 
parameters $\hi$, $\hii$, $\li$ and $\lii$ (and stochastically
decreasing in $\ki$ and $\kii$). Also by obvious monotonicity, 
$\mu_t$ is stochastically decreasing in $t$.
For each $t\geq0$ the density $\rho(t)$ under 
$\mu_t$ is continuous in each of the
parameters $\li$, $\lii$, $\hi$, $\hii$,
and by the above we have that 
$\rho(t)\searrow\ol\rho$.
The result follows since we can interchange the order of any
two decreasing limits.
\end{proof}

\begin{proof}[Proof of Theorem \ref{nonrobust_thm}]
Let $\nu$ denote the critical invariant measure mentioned in the statement of the theorem.
Let $\rho$ denote its density, and let $\li = \dfl(\rho)$ and $\hi =
\dfh(\rho)$.   Then, as in Proposition~\ref{3st_stat_prop}, 
$\nu$ is invariant under the 3-state contact process dynamics with
parameters $\ki$, $\kii$, $\li$, $\lii$, $\hi$, $\hii$.
Hence, under Assumption \ref{assump}, $\nu$ is the unique 
measure $\ui$ for these parameters.
So we have 
\[
\begin{split}
\li &= \dfl(\rho(\ki, \kii, \li, \lii,\hi, \hii)),\mbox{ and}\\
\hi &= \dfh(\rho(\ki, \kii, \li, \lii,\hi, \hii)).
\end{split}
\]
Now we increase each of the `good' parameters 
$\li, \lii, \hi$ and $\hii$ by an amount $\in(0, \eps/2)$ so small that
\[
\dfl(\rho(\ki, \kii, \li, \lii,\hi, \hii)) \mbox{ and }
\dfh(\rho(\ki, \kii, \li, \lii,\hi, \hii))
\] 
change by at most $\eps/2$.
This is possible by the continuity of $\dfl$ and 
$\dfh$ and Lemma~\ref{Rlem-rc}.
Denote the new parameters by 
$\ki' = \ki$, $\kii' = \kii$, $\li',\lii',\hi',\hii'$.
From the above it follows that there are 
continuous functions 
$\dfl'$ and $\dfh'$ which differ at most
$\eps$ from $\dfl$ and $\dfh$ respectively, such that
\[
\begin{split}
\li' &= \dfl'(\rho(\ki', \kii', \li', \lii',\hi', \hii')) ,\mbox{ and}\\
\hi' &= \dfh'(\rho(\ki', \kii', \li', \lii',\hi', \hii')) 
\end{split}
\]
(For example, one may take 
$\dfl'(r) = \dfl(r) + \li' -\dfl(\rho(\ki',\kii', \li', \lii',\hi', \hii'))$, 
and take $\dfh'$ in a similar way).
Let $\nu'$ be the invariant measure for the contact process with fixed parameters
$\ki', \kii', \li', \lii',\hi', \hii'$.
From the above we conclude that $\nu'$ is invariant under the
{\ddcp} dynamics with
parameters $\ki', \kii', \lii', \hii'$,
 $\dfl'$ and $\dfh'$, and each of 
these `new' parameters differs at most $\eps$
from the corresponding `old' one. Moreover, by 
Theorem~\ref{3st_sharp_thm}, 
this $\nu'$ is not critical.
This completes the proof.
\end{proof}


\section{Ingredients from the literature}
\label{prel_sec}

In this section we discuss a number of results and methods needed for 
the proofs of our main sharpness result, 
Theorem~\ref{3st_sharp_thm}.
First we discuss graphical representations for contact processes,
then some methods from percolation theory as well as influence results.

\subsection{Graphical representations}
\label{graphical_sec}
Central to the study of the contact process is a graphical
representation in terms of Poisson processes of `marks'
and `arrows'.  
For Model A this is as follows.   We write $D_1$ and $D_2$ for 
independent Poisson processes on $\ZZ^d\times[0,\oo)$ of intensities
$\ki$ and $\kii$, which we think of as the processes
of `down' marks ($1\rightarrow0$ and $0\rightarrow-1$, respectively).
Independently of these and of each other we define
Poisson processes $U_1$ and $U_2$ of `up' marks 
($0\rightarrow1$ and $-1\rightarrow0$)
of respective intensities $\hi$ and $\hii$.  Finally,
independently of all these and of each other
we define Poisson processes $A_1$ and $A_2$ 
of arrows ($0\rightarrow1$ and $-1\rightarrow0$, respectively)
with respective intensities $\li$ and $\lii$ on the ordered nearest-neighbor sites $xy$ times $[0,\oo)$.  The rates
of these processes are summarized in the following table:
\begin{center}
\begin{tabular}{l|l}
Spontaneous transitions & Neighbour transitions\\
(on $\{x\}\times[0,\oo)$) & 
(on $\{xy\}\times[0,\oo)$)\\\hline
$D_1$ rate $\ki$ & $A_1$ rate $\li$ \\
$D_2$ rate $\kii$ & $A_2$ rate $\lii$ \\
$U_1$ rate $\hi$ & \\
$U_2$ rate $\hii$ & \\
\end{tabular}
\end{center}
The interpretation of the Poisson processes is as usual with
interacting particle systems: At a point $(x,t)\in U_1$ the site $x$
changes from 0 to 1 (it does not change if it was in state $-1$ or 1
before), if $(xy,t)\in A_1$ and the process is in state 1 at $x$ and
in state $0$ at $y$, then $y$ changes to 1, etc.
It will later (in Section~\ref{sharp_sec}) be useful to
focus on the process of \emph{incoming} arrows on each line
$\{y\}\times[0,\infty)$, that is the collection of arrows
at points $(xy,t)$ for $x$ a neighbour of $y$.
For all $y$ the incoming arrows form  Poisson processes,
of intensities $2d\li$ and $2d\lii$ for $A_1$
and $A_2$ respectively.

We also consider the three state process with time-varying parameters
$\lambda$ and $h$. Such process is easily defined via its graphical
representation.  Let $\lambda(\cdot)$ and $h(\cdot)$ be nonnegative
integrable functions, and let $A_1$ and $U_1$ be independent Poisson
processes of rates $\lambda(\cdot)$ and $h(\cdot)$, respectively.

The process has the following 
monotonicity in the initial condition and in the graphical
representation.  Let $X$ denote the 3-state 
process with initial state
$\xi\in\{-1,0,1\}^{\ZZ^d}$ and graphical representation $D_1$, $D_2$,
$U_1$, $U_2$, $A_1$ and $A_2$, and let $X'$ denote the process 
with initial condition
$\xi'\in\{-1,0,1\}^{\ZZ^d}$ and graphical representation $D_1'$, $D_2'$,
$U_1'$, $U_2'$, $A_1'$ and $A_2'$.  If the following hold, then
$X'(t)\geq X(t)$ for all $t\geq0$:  $\xi'\geq\xi$,
$D_1'\se D_1$, $D_2'\se D_2$,
$U_1\se U_1'$, $U_2\se U_2'$, $A_1\se A_1'$ and $A_2\se A_2'$.

An analogous construction exists for Model B. 
The only changes are that the Poisson process $D_2$ has intensity 
$\kre$, and represents transition to state $-1$ 
irrespective of the previous state. 
Further, there is no process $A_2$ for the Model B. 

The monotonicity statement for these processes reads as follows.  
\begin{lemma}[Monotonicity]\label{3st_mon_lem}\label{re_mon_lem}
Model A is (stochastically)
increasing in the initial state and the parameters
$\li,\lii,\hi,\hii$ and decreasing in $\ki,\kii$.
Model B is (stochastically)
increasing in the initial state and the parameters
$\li,\hi,\hii$ and decreasing in $\ki,\kre$.
\end{lemma}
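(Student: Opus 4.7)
The plan is to prove Lemma~\ref{3st_mon_lem} by a single-probability-space coupling built from the graphical representation of Section~\ref{graphical_sec}, reducing the parameter statement to the graphical monotonicity already recorded there: if $\xi'\geq\xi$, $D_1'\se D_1$, $D_2'\se D_2$, $U_1\se U_1'$, $U_2\se U_2'$, $A_1\se A_1'$ and $A_2\se A_2'$, then $X'(t)\geq X(t)$ for all $t\geq0$.  Once this graphical monotonicity is established, the parameter monotonicity follows by a standard thinning construction.  For $\li'\geq\li$, produce $A_1'$ as a Poisson process of rate $\li'$ and let $A_1$ be the independent Bernoulli thinning with retention probability $\li/\li'$, so that $A_1\se A_1'$ and both marginals are correct; do the same for $\lii,\hi,\hii$.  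For the down parameters, where we want $\ki'\leq\ki$, construct the larger process $D_1$ first and thin it to obtain $D_1'\se D_1$, and analogously for $\kii$.  Combined with $\xi'\geq\xi$, the graphical hypothesis is met and the coupled processes satisfy $X'(t)\geq X(t)$ for all $t\geq0$, which is exactly the claimed stochastic domination.

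To verify the graphical monotonicity I would argue by induction on successive marks and arrows in a bounded space-time window, which is enough since the Harris-type construction of the process in such a window uses only finitely many marks almost surely (the total rate per site is bounded).  Suppose $X'(s)\geq X(s)$ coordinatewise on $\{-1,0,1\}$ for all $s<t$, and examine the first update at time $t$.  At a point of $D_i\setminus D_i'$ only $X$ can decrease while $X'$ is unchanged, so the order is preserved; at a shared point of $D_i'$ both systems attempt the same downward transition and a case analysis over the three possible old values of the affected coordinate shows the order survives.  Up-marks are handled symmetrically.  At an arrow in $A_i'\setminus A_i$ only the primed system can fire, and only by increasing $X'_y$.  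At a shared arrow in $A_i$, the unprimed rule fires only if the source $X_x$ equals $1$, in which case $X'_x\geq 1$ forces $X'_x=1$; so the primed rule either fires too (yielding equal new values at $y$) or $X'_y$ already exceeds the post-update value of $X_y$.  In every case the order is preserved.  The same verification works for Model B once one notes that the $D_2$-mark of rate $\kre$ sends both $0\to-1$ and $1\to-1$; this is still pointwise decreasing and no $A_2$ arrow exists.

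The argument is, in short, the classical graphical-coupling proof of monotonicity for interacting particle systems, and there is no serious obstacle.  The only point that requires care is the bookkeeping of which Poisson processes must be coupled in which direction: the \emph{down} families must satisfy $D_i'\se D_i$ (since their marks decrease the state) while the \emph{up} families satisfy $U_i\se U_i'$ and $A_i\se A_i'$ (since their marks increase the state).  With these conventions in place, both the induction verifying the graphical monotonicity and its promotion to parameter monotonicity via thinning are routine.
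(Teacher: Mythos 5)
Your proof is correct and follows exactly the route the paper has in mind: the paper records the graphical-representation monotonicity (that $X'(t)\geq X(t)$ under the stated inclusions $\xi'\geq\xi$, $D_i'\subseteq D_i$, $U_i\subseteq U_i'$, $A_i\subseteq A_i'$) in the paragraph immediately preceding the lemma, treats both that fact and the thinning reduction as standard, and gives no further argument. You simply fill in the case-by-case verification and the Poisson-thinning bookkeeping that the paper leaves implicit.
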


This monotonicity property implies for both processes that 
if the initial state $\xi$ consists of only 1's then the 
distribution of the process at time $t$ is stochastically 
decreasing in $t$.  Standard arguments then imply that
the process decreases (stochastically) to a limiting
distribution, which in both cases we denote by $\ui$
and call the \emph{upper invariant measure}.

\subsection{Finite-size criterion}

Next we present  a so-called finite-size criterion
for percolation. Its analog for Bernoulli percolation 
is a well-known classical result which,
as pointed out in
\cite{Berg11} (see Lemma 2.3 in that paper), can 
be generalized to the case where the configurations come from the
supercritical ordinary contact process. 
The same arguments as in \cite{Berg11} yield our Lemma \ref{fin_lem} below.

Let $d=2$ and 
let $H(m,n)$ denote the event that there is a left-right crossing of
the rectangle $[0,m]\times[0,n]$
(ie, that the subgraph of $[0,m]\times[0,n]$ spanned 
by sites in state $1$ contains a path from some
$(0,x)$ to some $(m,y)$ where $0\leq x,y\leq n$).
Let $V(m,n)$ denote the event that there is an up-down crossing of
the rectangle $[0,m]\times[0,n]$.

\begin{lemma}\label{fin_lem}
(For Model A we assume Assumption \ref{assump} here). \\
There is a (universal) constant $\hat \eps>0$ such that the following holds
for Model A under Assumption~\ref{assump} and for Model B.\\
 For
all strictly positive values of the parameters, there is $\hat n$
(depending on the parameters) such that 
\begin{enumerate}
\item If for some $n\geq\hat n$ we have 
$\ui(V(3n,n))<\hat\eps$, then there is $c>0$ such that 
$\ui(|C_0|\geq k)\leq e^{-ck}$ for all $k\geq0$.
\item If for some $n\geq\hat n$ we have 
$\ui(H(3n,n))>1-\hat\eps$, then 
$\ui(|C_0|=\oo)>0$.
\end{enumerate}
\end{lemma}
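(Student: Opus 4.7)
My plan is to adapt the proof of Lemma 2.3 in \cite{Berg11}, which establishes the analogous finite-size criterion for the two-state contact process, to the three-state setting. The three inputs I will use are: (a) exponentially fast convergence to the upper invariant measure --- Lemma~\ref{exp_lem} for Model B, Assumption~\ref{assump} for Model A; (b) monotonicity in the graphical representation (Lemma~\ref{3st_mon_lem}); and (c) the standard finite speed of propagation, namely that the restriction to a region $R$ of the process at time $T$, started from any fixed configuration, is a measurable function of the graphical representation in $R + B(0,vT)$ for some finite constant $v$. All three inputs transfer unchanged from the two-state to the three-state setting, whether we are in Model A or Model B.

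The first step is to approximate $\ui$ by a finite-range dependent field. I will fix $n$ large and set $T = \alpha \log n$ with $\alpha$ large; write $\mu^{\mathbf{1}}_T$ for the distribution at time $T$ started from all $1$'s. Input (a) gives $d_{\mathrm{tv}}(\mu^{\mathbf{1}}_{T;\Lambda}, \ui_{\Lambda}) \leq C_1 |\Lambda| e^{-C_2 T}$, which is smaller than any prescribed inverse polynomial in $n$ whenever $|\Lambda|$ is polynomial in $n$. Input (c) implies that the restriction of $\mu^{\mathbf{1}}_T$ to two regions separated by distance $> 2vT$ factorises as a product. Up to an inverse-polynomial total variation error I may therefore replace $\ui$ throughout by the approximately-finite-range field $\mu^{\mathbf{1}}_T$, reducing the task to a renormalisation problem for a field of range $O(\log n)$.

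For part (1), the hypothesis gives $\mu^{\mathbf{1}}_T(V(3n,n)) \leq \hat\eps + o(1)$. I will tile the plane by $n\times n$ blocks and declare a block \emph{bad} if it contains a crossing of an associated $3n\times n$ or $n\times 3n$ rectangle. The finite-range independence ensures that the bad blocks are dominated by a subcritical $k$-dependent site percolation on the coarse lattice, provided $\hat\eps$ is chosen below its threshold. Since any $1$-cluster of large diameter must traverse many bad blocks, a Peierls/contour argument as in \cite{Berg11} yields exponential decay of $\ui(|C_0|\geq k)$. For part (2), the hypothesis gives $\mu^{\mathbf{1}}_T(H(3n,n)) > 1 - \hat\eps - o(1)$. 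Lemma~\ref{3st_mon_lem} shows that crossing events are monotone functions of the independent Poisson inputs of the graphical representation, so Harris' inequality applied at the level of these Poissons gives positive correlation of increasing crossing events under $\mu^{\mathbf{1}}_T$. A square-root trick together with RSW-style gluing then upgrades the hypothesis to crossings of $Cn\times Cn$ rectangles of both orientations, and a standard renormalisation in which a block is declared \emph{good} if it and its neighbours carry such crossings dominates a supercritical $k$-dependent site percolation; gluing crossings on good blocks produces an infinite $1$-cluster with positive probability.

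The step I expect to be the main obstacle is the positive-correlation argument in part (2): for the three-state model I cannot appeal to an intrinsic FKG property of $\ui$, so the argument must be routed through the graphical representation, using that $\mu^{\mathbf{1}}_T$ is monotone in the independent Poisson inputs and applying Harris' inequality at that level. Once this is in place, the remaining steps are a direct transcription of those in \cite{Berg11}, with the state space $\{-1,0,1\}$ replacing $\{0,1\}$ and Assumption~\ref{assump} or Lemma~\ref{exp_lem} replacing the classical two-state exponential convergence result.
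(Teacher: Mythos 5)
Your proposal is essentially correct and follows the same route as the paper, which for this lemma simply defers to ``the same arguments as in \cite{Berg11}'' (Lemma 2.3 there): approximate $\ui$ on a polynomial-size window by the time-$T$ distribution started from all $1$'s with $T$ of order $\log n$, use finite speed of propagation to truncate this to a field of range $O(\log n)$ with only an inverse-polynomial total-variation error, then run the classical Russo--Kesten finite-size renormalisation by comparison with $k$-dependent site percolation. Your observation about positive association is also the right one --- since Models A and B are both attractive, increasing crossing events are increasing functions of the independent Poisson inputs of the graphical representation, so Harris' inequality at the Poisson level delivers the FKG property needed for the gluing in part (2); this is in fact a standard rather than problematic step, and the paper itself invokes the FKG inequality later in exactly this way.
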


\subsection{An influence result}

We further need the following combination of the Margulis-Russo formula and
an influence result which essentially comes 
from Talagrand's paper \cite{Talag94}
(which in turn is closely related to~\cite{kkl}),
where all
the $p_i$'s in the description below are equal. 
For our particular situation we straightforwardly generalized
the form (with two different $p_i$'s) in 
\cite[Corollary~2.9]{Berg11}. 
See also e.g.~\cite{FriedKalai96} 
and~\cite{BolloRiord06,BolloRiord08}.

Let $X=(X_{i,j}:1\leq i\leq m, 1\leq j\leq n)$ be a collection of
independent $\{0,1\}$-valued random variables such that 
\[
P(X_{i,j}=1)=p_i\quad\mbox{for all } j\in\{1,\dotsc,n\}.
\]
For fixed $i,j$, let $X^{(i,j)}$ denote the random
vector obtained from $X$ by replacing $X_{i,j}$
with $1-X_{i,j}$ (and keeping all other $X_{i',j'}$
the same).  For an event $A$, define the \emph{influence}
of $X_{i,j}$ on $A$ as
\[
I_{i,j}(A)=P(\{X\in A\}\vartriangle\{X^{(i,j)}\in A\}),
\]
where $\vartriangle$ denotes symmetric difference.

\begin{lemma}\label{infl_lem}
Fix $k\in\{1,\dotsc,m\}$ and suppose that $H$ is an event
which is increasing in the $X_{i,j}$ for $i\leq k$,
and decreasing in the $X_{i,j}$ for $i\geq k+1$.
Let $N$ denote the number of indices $(i,j)$
such that $I_{i,j}(H)$ is maximal.  There is an
absolute constant $K$ such that 
\[
\sum_{i=1}^k\frac{\partial}{\partial p_i}P(H)-
\sum_{i=k+1}^m\frac{\partial}{\partial p_i}P(H)\geq
\frac{P(H)(1-P(H))}{K\max_i p_i \log(2/\min_i p_i)}\log N.
\]
\end{lemma}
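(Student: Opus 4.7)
The plan is to combine the Margulis-Russo formula with an asymmetric form of Talagrand's influence inequality, essentially carrying over the argument of \cite[Corollary~2.9]{Berg11} from two row-probabilities to $m$.

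First I would apply Margulis-Russo row by row. By the monotonicity assumption, $\partial P(H)/\partial p_i = +\sum_j I_{i,j}(H)$ for $i \leq k$ and $\partial P(H)/\partial p_i = -\sum_j I_{i,j}(H)$ for $i \geq k+1$, with the signs matching the direction of monotonicity in each row. Hence the left-hand side of the claimed inequality equals exactly the total influence $S := \sum_{i,j} I_{i,j}(H)$.

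Second I would invoke the asymmetric Talagrand inequality to lower-bound $S$. The two-row version in \cite[Corollary~2.9]{Berg11} relies only on the product structure with row-wise constant marginals together with a standard hypercontractivity estimate, and extends essentially verbatim to $m$ different row-probabilities, yielding
\[
S \;\geq\; \frac{c\,P(H)(1-P(H))\,\log(1/M)}{\max_i p_i \,\log(2/\min_i p_i)},
\]
with $M = \max_{i,j} I_{i,j}(H)$ and $c$ an absolute constant. Finally, $\log(1/M)$ is converted into $\log N$ via the trivial observation $S \geq NM$ (valid since $N$ indices attain the maximum $M$), which gives $\log(1/M) \geq \log(N/S)$; a short algebraic manipulation (splitting according to whether $M \leq N^{-1/2}$, so $\log(1/M) \geq \tfrac12 \log N$ directly, or $M > N^{-1/2}$, so $S > \sqrt N$ already dominates) then delivers the advertised inequality, with the new absolute constant $K$ absorbing all numerical factors.

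I expect the Talagrand step to be the main obstacle: although the generalization from two to $m$ row-probabilities is conceptually routine, careful bookkeeping is needed to obtain the correct prefactor $(\max_i p_i\,\log(2/\min_i p_i))^{-1}$, which encodes the worst-case dependence on the $p_i$'s. Once this is in hand, Margulis-Russo and the passage from $\log(1/M)$ to $\log N$ are essentially cosmetic.
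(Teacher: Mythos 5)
Your overall plan---identify the signed sum of partial derivatives with the total influence $S=\sum_{i,j} I_{i,j}(H)$ via Margulis--Russo, invoke a $p$-biased Talagrand/KKL-type inequality giving $S \ge c\,P(H)(1-P(H))\log(1/M)/\big(\max_i p_i\,\log(2/\min_i p_i)\big)$ with $M=\max_{i,j}I_{i,j}(H)$, and then trade $\log(1/M)$ for $\log N$---is exactly what the paper has in mind when it points to \cite{Talag94} and \cite[Cor.~2.9]{Berg11}. Steps one and two are fine; the gap is in the final conversion.

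In your Case~B you argue that if $M>N^{-1/2}$ then $S\ge NM>\sqrt N$ ``already dominates.'' But the target quantity
$R:=\frac{P(H)(1-P(H))}{K\,\max_i p_i\,\log(2/\min_i p_i)}\log N$
is \emph{not} bounded by an absolute multiple of $\sqrt N$, because the prefactor $P(H)(1-P(H))/\big(\max_i p_i\log(2/\min_i p_i)\big)$ can be unbounded when $\max_i p_i$ is small while $P(H)(1-P(H))$ stays of order one. Concretely, take $m=1$, $n$ variables with $p_1=1/n$, and $H=\{\sum_j X_{1,j}\ge 1\}$. Then $P(H)(1-P(H))\to e^{-1}(1-e^{-1})$, the common influence is $M=(1-1/n)^{n-1}\to e^{-1}$ (so Case~B applies for large $n$), $N=n$, and $R$ is of order $n/K$, which is much larger than $\sqrt{N}=\sqrt n$. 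The lemma itself holds in this example (since $S=nM\approx n/e$), but the intermediate bound $\sqrt N$ says nothing: you have discarded the factor $M$ down to $N^{-1/2}$. To close the gap one must keep the full $S\ge NM$ in this regime and combine it with the Talagrand bound (for instance feed $M\le S/N$ into $\log(1/M)\ge\log N-\log S$ and analyse the resulting self-improving inequality), and even then the bookkeeping is delicate precisely because the prefactor above is not bounded by an absolute constant.
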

For our application, $m$ represents the number of different types of symbols. 
We apply it with $m=6$ and $k=4$ for Model A, and
with $m=5$ and $k=3$ for the Model B.

\subsection{RSW-result}

The following result is usually referred to as a 
\textsc{rsw}-type result as this type of result was pioneered, for Bernoulli
percolation, in
papers by Russo, Seymour and Welsh.  
A highly non-trivial extension of (a weak version of) 
the original \textsc{rsw}-result to a dependent percolation model,
namely  the random
Voronoi model, was proved by Bollob\'as and Riordan
\cite{BolloRiord06} 
(and modified in~\cite{BeBrVa08} to a form which is closer to 
Lemma~\ref{rsw_lem} below).
As pointed out in \cite{BolloRiord06},
the result holds under quite mild geometric, positive-association 
and spatial mixing conditions.
In \cite{Berg11} (see Proposition 2.4 in that paper) it is explained 
that these conditions hold
for the supercritical ordinary contact process. 
The same arguments hold for our models.

\begin{lemma}\label{rsw_lem}
Consider the upper invariant measure $\ui$ for Model A under Assumption~\ref{assump} 
or Model B with $h>0$. 
If for some $\a>0$ we have 
$\limsup_{n\rightarrow\oo} \ui(H(\a n, n))>0$
then for all $\a>0$ we have 
$\limsup_{n\rightarrow\oo} \ui(H(\a n, n))>0$.
\end{lemma}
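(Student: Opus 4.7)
The proof would follow the framework of Bollob\'as--Riordan~\cite{BolloRiord06}, adapted to dependent percolation models as in~\cite{BeBrVa08,Berg11}. The plan is to verify that $\ui$ satisfies three abstract hypotheses---the relevant lattice symmetries, the FKG inequality, and a quantitative spatial mixing condition---and then invoke the abstract RSW theorem of~\cite{BolloRiord06}.

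Symmetry and FKG are routine. The upper invariant measure inherits translation invariance and the reflection/rotation symmetries of $\ZZ^2$ from the joint law of the Poisson processes $D_1,D_2,U_1,U_2,A_1,A_2$ (and their Model~B analogues) in the graphical representation: starting from the all-$1$ configuration these symmetries are preserved at every finite $t$, and hence in the $t\to\oo$ limit. For positive association, observe that the time-$t$ marginal $\mu^\const_t$ is a monotone function of the graphical data (this is precisely Lemma~\ref{3st_mon_lem}), so FKG for independent Poisson processes lifts to FKG for $\mu^\const_t$, and the property is preserved under the weak limit to $\ui$.

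The substantive ingredient is the mixing condition: for finite sets $\L_1,\L_2\se\ZZ^2$ at distance $d$, the joint restriction of $\ui$ to $\L_1\cup\L_2$ should be close to the product of the marginals, with an error tending to $0$ as $d\to\oo$. Fix a time horizon $T$ to be chosen. Running the process from the all-$1$ initial configuration, a straightforward ``light-cone'' estimate---obtained by dominating the incoming-arrow process at each site by a rate-$2d(\li+\lii)$ Poisson process and comparing the spread of influence to a branching random walk---shows that, up to an event of probability exponentially small in $d-cT$ for some constant $c=c(\li,\lii)$, the restriction of $X(T)$ to $\L_i$ is measurable with respect to the Poisson marks in a neighbourhood of $\L_i$ of radius much less than $d$. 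The two restrictions are then exactly independent on this good event. On the other hand, by Assumption~\ref{assump} for Model~A (respectively Lemma~\ref{exp_lem} for Model~B), each marginal at time $T$ is within total variation $C_1|\L_i|e^{-C_2 T}$ of $\ui_{\L_i}$. Choosing $T=\eps d$ with $\eps$ sufficiently small gives a quantitative mixing estimate with exponential decay in $d$.

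With symmetry, FKG and this mixing estimate verified, we are in the regime in which the abstract RSW statement of~\cite{BolloRiord06}, in the form of~\cite{BeBrVa08}, applies; this is exactly the route taken for the ordinary supercritical contact process in~\cite[Proposition~2.4]{Berg11}, and the same proof goes through essentially verbatim. The main obstacle is the mixing step---this is where Assumption~\ref{assump} enters essentially for Model~A, since without exponential convergence to equilibrium one cannot convert the independence on the graphical representation into near-independence under~$\ui$. For Model~B the convergence in Lemma~\ref{exp_lem} is automatic and the verification is correspondingly easier.
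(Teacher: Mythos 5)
Your proposal is correct and follows essentially the same route as the paper, which itself only cites~\cite{BolloRiord06,BeBrVa08} and \cite[Proposition~2.4]{Berg11} and asserts that the same arguments (symmetry, positive association, spatial mixing via a light-cone estimate plus exponential convergence to equilibrium) carry over to Models A and B. You have simply spelled out the details that the paper leaves implicit, including the correct identification of Assumption~\ref{assump} (resp.\ Lemma~\ref{exp_lem}) as the ingredient that converts graphical-representation independence into near-independence under~$\ui$.
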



\section{Proofs of sharpness results}
\label{sharp_sec}

In this section we prove Theorem~\ref{3st_sharp_thm} and Corollary~\ref{3st_sharp_cor}.
Here is an outline of the argument that follows.  
Suppose $\ui$ is an invariant measure
for which the cluster
size $|C_0|$ does \emph{not} have exponential tails.  The first 
part of Lemma~\ref{fin_lem} together with Lemma~\ref{rsw_lem}
imply that certain crossing probabilities then have
uniformly positive probability under $\ui$.  We want to 
apply Lemma~\ref{infl_lem} to show that,
with an arbitrarily small increase of the relevant parameters,
we can `boost' this to get crossing probabilities close
to 1.  The second part of Lemma~\ref{fin_lem} then tells
us that $|C_0|$ is now infinite with positive
probability.

One of the main technical obstacles with carrying out this argument
is that Lemma~\ref{infl_lem} applies to events
which are defined in terms of a finite number of Bernoulli
variables, whereas contact processes are defined in terms
of `continuous' objects (Poisson processes).  The first
step is therefore a \emph{stability coupling}, a type of coupling which was also used
in \cite{BolloRiord06} for the Voronoi model, 
and later in \cite{Berg11} for the ordinary contact process. 
It tells us that
if we increase the `good' parameters then we can 
encode the contact process sufficiently well 
in terms of Bernoulli variables.  This is the topic of 
Section~\ref{stab_sec}. We give a detailed proof of
this part of the argument for Model A because it is
considerably more complicated than the corresponding one 
in~\cite[Lemma~3.2]{Berg11} for the
ordinary contact process. 
(For Model B we give an outline.)
The subsequent parts of the proof appear in Section~\ref{sharp_pf_sec}.
Recall that we are only considering the planar case
$d=2$.  

We start by pointing out that the monotonicity lemma 
(Lemma~\ref{3st_mon_lem})
implies that Theorem~\ref{3st_sharp_thm} follows once we establish the following claim: 
Let $\ki,\kii,\li,\lii,\hi,\hii>0$ be fixed, and 
consider the parameterization
$\{(\ki,\kii,r \li,r \lii,r \hi,r \hii):r\geq0\}$ for Model A;
and let $\ki,\kre,\li,\hi,\hii>0$ be fixed, and 
consider the parameterization
$\{(\ki,\kre,r \li,r \hi,r \hii):r\geq0\}$ for Model B.
Define 
$r_\perc=\inf\{r\geq0: \ui(|C_0|=\oo)>0\}$
as a function of the other parameters. 
Then the percolation transition is sharp in $r$, in
that if $r<r_\perc$ then $\ui(|C_0|\geq n)\leq e^{-cn}$
for some $c>0$.

It is convenient to
rescale time so that the total rate of `events per line'
is 1.  That is, we assume 
\begin{equation}\label{time_eq}
\begin{split}
&\ki+\kii+4\li+4\lii+\hi+\hii=1\qquad\mbox{(Model A)},\\
&\ki+\kre+4\li+\hi+\hii=1\qquad\mbox{(Model B)}.
\end{split}
\end{equation}
This clearly leaves the invariant measure $\ui$ unchanged.
Write 
\begin{equation*}
q=\left\{
\begin{array}{ll}
4\li+4\lii+\hi+\hii
&\mbox{(Model A)},\\
4\li+\hi+\hii
&\mbox{(Model B)}.
\end{array}\right.
\end{equation*}
Thus $q$ equals the part of the sum in~\eqref{time_eq}
which is decreased in  the sharpness theorems. 
We vary the parameter $q\in[0,1]$
while keeping the sum~\eqref{time_eq}
constant. 
We are then required to prove, under the relevant
assumptions, that if $q$ is such that 
$\ui(|C_0| \geq n)$ goes to $0$ as $n \rightarrow \infty$, 
but slower than exponentially, then for any $q''>q$,
\[
\ui(|C_0|=\oo) > 0.
\]

The objective of the stability coupling is the following.
We wish to discretize time into intervals of length 
$\delta=n^{-\alpha}$ (for a certain $\alpha>0$), 
and then apply the influence bound of Lemma~\ref{infl_lem}. 
However, even if we choose $n$ very large, 
we cannot avoid that there are intervals with more than one symbol. 
The solution is an intermediate step: for $q'\in(q,q'')$ we couple 
the processes for values $q$ and $q'$ in such a way that `essential' 
symbols have distance at least $\delta$. 
The existence of such a coupling is stated in
Lemma~\ref{stab_lem} below.
Subsequently, we use the influence bound of Lemma~\ref{infl_lem} 
to conclude that when $q'$ is further increased 
to $q''$, then the criterion for percolation
in Lemma~\ref{fin_lem} is satisfied. 

\subsection{Stability coupling}
\label{stab_sec}

The  processes with different values of $q$
can be coupled in a natural way. 
Since this coupling procedure serves as a `starting point' 
for the more complicated coupling
in Lemma~\ref{stab_lem} below, we give a brief sketch here.
Let $\Pi$ be a
Poisson point process with unit density on $\ZZ^2\times\RR$, 
and write $[\Pi]$ for the support of $\Pi$ (i.e. the set of points in a realization of this point process). 
We interpret $(x,t)\in[\Pi]$ as a \emph{symbol} in the graphical 
representation.   In a second step we decide the \emph{type} of the symbol. 
Types are from the set 
\begin{equation*}
{\rm T}=\left\{
\begin{array}{ll}
\big\{ D_1,D_2,U_1,U_2,
A_1^\uparrow,A_1^\downarrow,A_1^\leftarrow,A_1^\rightarrow,
A_2^\uparrow,A_2^\downarrow,A_2^\leftarrow,A_2^\rightarrow\big\} 
&\mbox{(Model A)},\\
\big\{ D_1,D_2,U_1,U_2,
A_1^\uparrow,A_1^\downarrow,A_1^\leftarrow,A_1^\rightarrow\big\} 
&\mbox{(Model B)},
\end{array}\right.
\end{equation*}
corresponding to the notation in Section~\ref{graphical_sec}, arrow 
superscripts indicating the direction of (incoming) arrows. 
From now on we refer to symbols with types $D_1$ and $D_2$
as \emph{down} symbols and the remaining as \emph{up}
symbols.
For each symbol $(x,t)\in[\Pi]$, we consider three independent 
random variables drawn uniformly from the unit interval, 
denoted $Q_{(x,t)}, B_{(x,t)}$, and $G_{(x,t)}$.  These are
independent also of all other random variables used.
We assign an \emph{up} symbol whenever $Q_{(x,t)}\leq q$
and a \emph{down} symbol whenever $Q_{(x,t)}>q$.
For Model A we assign type $D_1$ if 
$Q_{(x,t)}> q$ and $B_{(x,t)}\le \ki/(\ki+\kii)$ and we assign type 
$D_2$ if $Q_{(x,t)}> q$ and $B_{(x,t)}>\ki/(\ki+\kii)$. 
Similarly, whenever $Q_{(x,t)}\le q$, we assign an \emph{up} symbol based 
on the outcome of $G_{(x,t)}$, in such a way that the marginal distributions for 
the ten different \emph{up} symbols (four different arrows of 
type $A_1$, another four of type $A_2$, and the two 
types $U_1$ and $U_2$) have the desired form. 
A very similar construction holds
for Model B (without the $A_2$ symbols). 
We write $H^q$ for the graphical representation thus
obtained.  So $H^q$ consists of the processes  
$D_1, D_2, A_1, A_2, U_1, U_2$ for Model A, 
and $D_1, D_2, A_1, U_1, U_2$ for Model B,
as in Section~\ref{graphical_sec}.
(Of course, $H^q$ depends not only on $q$ but also on the remaining
parameters $\k,\l$, $\kii,\lii$ etc; 
however, we suppress this from the notation.)
The reader may convince her-/himself that the marginal distributions 
coincide with the definition of Section~\ref{graphical_sec}. 

We write $\PP$ for the probability measure governing 
$\Pi$, $Q_{(x,t)}$, $B_{(x,t)}$ and $G_{(x,t)}$ as above,
and $\PP_q$ for the probability measure governing the
resulting graphical representation $H^q$.
Thus $\PP$ is a coupling of all the $\PP_q$'s, $0 \leq q \leq 1$.

For each $x \in \ZZ^2$, $q\in[0,1]$ and $n \in \NN$, 
we define the random variable $\eta_x^{(q,n)}$
as the state ($0$, $1$ or $-1$) at $(x,0)$,
subject to the boundary condition assigning state 1 to any
point $(y,s)$ with $d(x,y)=\lfloor \sqrt{n} \rfloor$ or
$s=-\sqrt{n}$.
(Here $d(x,y)$ denotes the usual graph distance.)
Note that $\eta_x^{(q,n)}$ is determined by the graphical 
representation in the space time region

\begin{equation}\label{eq-st-xqn}
\{(y,s) \, : \, d(x,y) \leq \lfloor \sqrt n \rfloor \mbox{ and } s \in [-\sqrt n, 0]\} . 
\end{equation}
More generally, for $x \in \ZZ^2$, $q\in[0,1]$ and $(z,t)$
in the space-time region \eqref{eq-st-xqn},
we define $\eta_z^{(x;q,n)}(t)$ as the state at $(z,t)$ subject to the same boundary condition as in the definition
of $\eta_x^{(q,n)}$ above
(i.e.  the b.c. assigning state 1 to any
point $(y,s)$ with $d(x,y)=\lfloor \sqrt{n} \rfloor$ or $s=-\sqrt{n}$). \\
Note that $ \eta_x^{(x;q,n)}(0) = \eta_x^{(q,n)}$ and that
$\eta_z^{(x; q,n)}(t) = 1$ if $d(x,z) = \lfloor \sqrt n \rfloor$ or $t = \sqrt n$.

Recall the length $\delta = n^{-\alpha}$ introduced in the 
paragraph preceding this section.
For $v\in \ZZ^2$ and $k\in\NN$, $0\le k\le \lceil \sqrt n/\delta \rceil$, and 
type $\tau\in{\rm T}$, we introduce the indicator functions 
\begin{equation}\label{X_def_eq}
	X_\tau^{(q,k,\delta)}(v):=\mathbbm{1}\big\{
	\text{$\exists$ symbol of type $\tau$ in 
	$\{v\}\times(-k\delta,(-k+1)\delta]$}
	\big\}.
\end{equation}
For each $\delta$-interval,
these $X$ variables only indicate whether there are 
symbols of a certain type in the interval, but do not 
tell us their precise locations or order. 
However, this information is often enough to conclude the value of $\eta_z^{(x;q,n)}(t)$ defined above: 
We define  $\eta_z^{(x;q,n,\delta)}(t)$ as the maximal $m \in \{-1, 0, 1\}$ for which
the values of the elements of
\[
\big\{ X_\tau^{(q,k,\delta)}(v)\colon
\text{$\tau \in T$, $v\in \ZZ^2$, and $k\in\NN$}\big\}
\]
imply that $\eta_z^{(x;q,n)}(t) \geq m$.
(Because of the boundary condition in the definition of 
$\eta_z^{(x;q,n)}(t)$ we can, in fact, restrict to $v$'s with $d(v,x) \leq \sqrt n$ and
$k$'s with $0\le k\le \lceil \sqrt n /\delta \rceil$).
Clearly
$\eta_z^{(x; q,n,\delta)}(t) \leq\eta_z^{(x;q,n)}(t)$ for all $\d>0$.
From now on we write $\eta_x^{(q,n,\delta)}$ for
$\eta_x^{(x;q,n,\delta)}(0)$. (Note that this is the maximal $m \in \{-1, 0, 1\}$ for which
the $X_\tau^{(q,k,\delta)}(v)$'s imply that $\eta_x^{(q,n)} \geq m$). 

Let $L_n$ be the box $[n, 5n] \times [n, 2 n]$. (The precise choice of this box is not essential).
The following result holds for  model A as well as model B (subject to the correct
interpretation);  note that for model A 
we do not require Assumption~\ref{assump} for this result.

\begin{lemma}[Stability coupling]\label{stab_lem}
Let $\alpha>0$ and, for each $n$, let $\delta= \delta_n = n^{-\alpha}$. 
For any $0<q<q'<1$, there is a coupling $\tilde\PP=\tilde\PP_{q,q',n}$ of $\PP_q$ and 
$\PP_{q'}$ such that 
$\tilde\PP\big(\forall x\in L_n:\eta^{(q,n)}_x\le\eta^{(q',n,\d)}_x\big)\rightarrow 1$ 
as $n\to\infty$.
\end{lemma}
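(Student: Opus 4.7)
The plan is to build $\tilde\PP$ from a single realization of the Poisson point process $\Pi$ and of the auxiliary uniform variables $Q_{(x,t)}, B_{(x,t)}, G_{(x,t)}$ from the beginning of Section~\ref{stab_sec}, while assigning symbol types simultaneously according to both $q$ and $q'$. A symbol at $(x,t)\in[\Pi]$ is declared \emph{up} in $H^q$ iff $Q_{(x,t)}\le q$ and \emph{up} in $H^{q'}$ iff $Q_{(x,t)}\le q'$; the refined types are then determined by exactly the same thresholds on $B_{(x,t)}$ and $G_{(x,t)}$ in both realizations. In the notation of Section~\ref{graphical_sec} this produces $D_1'\subseteq D_1$, $D_2'\subseteq D_2$, $U_1\subseteq U_1'$, $U_2\subseteq U_2'$, $A_1\subseteq A_1'$ and $A_2\subseteq A_2'$. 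With identical boundary state $\equiv 1$, Lemma~\ref{3st_mon_lem} then gives pointwise domination $\eta^{(q,n)}_x\le\eta^{(q',n)}_x$ for every $x\in L_n$.

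The remaining task is to compare $\eta^{(q',n)}_x$ with its $X$-based lower bound $\eta^{(q',n,\d)}_x$. I would introduce a \emph{bad event} $B_n$ consisting of all realizations in which some discretization interval $(-k\d,(-k+1)\d]$ contains two or more symbols of $H^{q'}$ within the closed neighborhood $\{v\}\cup\{\text{nbrs of }v\}$ of some site $v$, as $(v,k)$ ranges over (a slight spatial enlargement of) the union of space-time regions~\eqref{eq-st-xqn} relevant to some $x\in L_n$. The key observation is that on $B_n^c$ no two events of the graphical representation can compete for causal precedence within a single $\d$-interval: the order is therefore irrelevant, and the $X$-variables completely determine the evolution of $H^{q'}$ in the relevant region. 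Consequently $\eta^{(q',n,\d)}_x=\eta^{(q',n)}_x$ on $B_n^c$, and combining with the monotonicity step yields the desired inequality.

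It remains to control $\tilde\PP(B_n)$. The space-time volume of the enlarged region is of order $n^{5/2}$, so the total number of (site, $\d$-interval) pairs to be checked is $O(n^{5/2}/\d)=O(n^{5/2+\a})$. Since $\Pi$ has unit density, the probability that two or more symbols fall in a fixed neighborhood during an interval of length $\d$ is $O(\d^2)=O(n^{-2\a})$; a union bound then gives $\tilde\PP(B_n)=O(n^{5/2-\a})$, which tends to zero once $\a$ is chosen sufficiently large. For Model B the argument is identical, with fewer symbol types and hence fewer conflicts to exclude, which matches the authors' remark that only an outline is needed in that case.

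The main obstacle, signalled in the text before the lemma, lies in carefully cataloguing which pairs of $H^{q'}$-symbols in a single $\d$-interval actually destroy information --- for instance $(U_1,D_1)$ at the same site is problematic, since the two orders give different states, whereas two $D_1$'s are harmless. For Model A the extra state $-1$ and the extra symbols $D_2,U_2,A_2$ enlarge this catalogue considerably, and one has the additional subtlety that an incoming arrow of type $A_1$ or $A_2$ at $v$ interacts with the state at a neighboring source site, so pairs across $\{v\}$ and its neighbors must also be ruled out. The virtue of defining $B_n$ in terms of the entire closed neighborhood $\{v\}\cup N(v)$ is that every such ``bad'' pair is dominated by the simple multiplicity event to which the above union bound applies.
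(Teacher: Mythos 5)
Your proposal has a genuine gap that goes to the heart of the lemma: the union bound in the final paragraph gives $\tilde\PP(B_n)=O(n^{5/2-\alpha})$, which tends to zero only once $\alpha>5/2$. But the lemma is stated for \emph{every} $\alpha>0$, and — crucially — in the proof of Theorem~\ref{3st_sharp_thm} the lemma must be applied with $\alpha$ taken \emph{small} (see the derivation leading to \eqref{q2_eq_2}, where the gain from Lemma~\ref{infl_lem} is $\asymp\log N/\log(2/\delta)\asymp 1/\alpha$, so $\alpha$ is sent to $0$). With $\alpha$ small, $\delta$-intervals containing two or more symbols occur with probability tending to $1$, and the ``no competing events in any interval'' event $B_n^c$ on which your argument rests simply fails. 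The paper explicitly flags this: it is precisely why the cruder bound ``take $\alpha$ large enough that multi-symbol intervals never occur'' is discarded.

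Structurally, your proof never uses the strict inequality $q<q'$; you construct the standard monotone coupling of $H^q$ and $H^{q'}$ and then try to argue that $\eta^{(q',n,\delta)}=\eta^{(q',n)}$ off a small event, which is a statement about $H^{q'}$ alone. The paper's proof is of a different kind: it uses the positive slack $q'-q$ to build a measure-preserving ``crossover'' map $\psi_\cC$ on $\Omega_2$ that swaps, within each small space-time cluster, the \emph{bad} outcomes $\cB$ (two symbols within distance $\delta$) with an equally-or-more likely set of \emph{good} outcomes $\cG$ (where all symbols in $H^q$ are \emph{down} of a carefully chosen pattern and all symbols in $H^{q'}$ are \emph{up}). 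On the swapped outcome the domination $\eta^{(q,n)}_x\le\eta^{(q',n,\delta)}_x$ holds for an entirely different reason (the $H^q$-configuration is pessimal, the $\tilde H^{q'}$-configuration optimal), so it does not matter that some $\delta$-intervals are still crowded. The lower bound $\PP^{\om_1}_\cC(\cG)\ge (q'-q)^{|\cC|}(\min\{\ki/(\ki+\kii),\kii/(\ki+\kii)\})^{|\cC|}$ is where the slack $q'-q$ enters, and it is compared against $\PP^{\om_1}_\cC(\cB)\le 2\lceil 9/\alpha\rceil^2 n^{-\alpha/2}$, which works for \emph{every} fixed $\alpha>0$ once $n$ is large. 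To repair your argument you would need to add a mechanism of this type — sacrificing some favorable realizations of $H^{q'}$ to compensate for the realizations where the $\delta$-discretization loses information — rather than trying to make the bad event itself rare.
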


We give full details for Model A.
\begin{proof}[Proof for Model A]
Let $R_n$ denote the box $[0, 6n] \times [0, 3 n]$.
Note that $\eta^{(q,n)}_x$ and $\eta_x^{(q',n,\d)}$, $x \in L_n$, are
determined by the graphical representation in the space-time region
\[
\ST_n := R_n\times[-\lceil \sqrt n \rceil,0].
\] 
We let $\delta_1=\sqrt{\delta}=n^{-\alpha/2}$, and throughout 
the proof consider intervals $I$ of the form 
$\{x\}\times[-(k+1)\d_1,-k\d_1]$ whose intersection with 
$\ST_n$ is nonempty ($k\in\NN_0$). 
We call such an interval $I$ \emph{occupied} 
whenever $I\cap[\Pi]\neq\varnothing$. 
Moreover, we call two intervals $I^{(x,k)}$ and 
$I^{(y,\ell)}$ \emph{neighbors} if either $x=y$ 
and $|k-\ell|=1$, or $d(x,y)=1$ and $k=\ell$. 
This neighborhood relation determines the notion of 
\emph{clusters of neighboring occupied intervals}, 
henceforth called \emph{clusters}. 
We use the notation $\cC$ for clusters, and define
the \emph{size} $|\cC|$ to be the number of 
\emph{symbols} in $\cC$ (not the number of intervals). More precisely,
$|\cC| = \sum_{I \in \cC} |I\cap[\Pi]|$, where  $|I\cap[\Pi]|$ is the cardinality
of $I\cap[\Pi]$.
Note that the clusters depend only on the Poisson 
process $\Pi$, hence the law of the clusters is 
independent of any of the parameters $\ki,\kii,\li,\lii,\hi,\hii$, 
and the occupation of the intervals is pairwise independent. 

Let $S^\a_n$ be the event that each occupied cluster in $\ST_n$
has size smaller than $\lceil 9/\alpha \rceil$.
It follows in an elementary way from
properties of the Poisson process (see ~\cite[equation~(18)]{Berg11}) that
\begin{equation}\label{eq_cluster_size_bound}
\lim_{n\to\infty} \PP\big(S^\a_n\big)=1.
\end{equation}

Before proceeding with the argument, here are the main ideas.
Ideally, we would like to take $\alpha$ so large 
that the size of the largest cluster shrinks to 1.
However, later on (just before \eqref{q2_eq_2}) 
we need to take $\alpha$ rather close to 0; 
hence
the existence of clusters of size $\ge2$ cannot be ruled out, 
and a result of the form \eqref{eq_cluster_size_bound} is
essentially  the best bound we get. 
(The precise value $9 / \alpha$ is not important).
We solve the issue by factorizing our probability space 
$\Omega=\Omega_1\times\Omega_2$, where $\Omega_1$ determines the
clusters of intervals, the relative order (w.r.t.\!
the time coordinates) of the symbols within each cluster, 
as well as some other information, and $\Omega_2$ is responsible for 
the `fine-tuning' (including the precise location of the symbols). 
We then first sample from $\Omega_1$, which in particular fixes the clusters. 
For each cluster, when sampling from $\Omega_2$, we use a 
`crossover', which sacrifices an unnecessarily `good' event in order to 
avoid a `bad' event (where `bad' means lack of 
$\delta$-stability). `Crossover' techniques 
have been used earlier for Voronoi percolation in~\cite[Theorem~6.1]{BolloRiord06},
and for percolation in the (ordinary) contact process  in~\cite{Berg11}. 
However, 
it turns out that the model we consider requires a considerably 
more subtle `crossover recipe' than
in~\cite{Berg11}.

We now give a detailed description.
Outcomes $\om_{1}\in\Omega_{1}$ contain the following
partial information 
about $\Pi$: First of all, for any interval $I$, $\om_1$ determines the number of elements 
of $[\Pi]\cap I$.  This identifies the clusters of $\ST_n$. 
We call an interval $I^{(x,k)}=\{x\}\times[-(k+1)\d_1,-k\d_1]$ 
\emph{vertically isolated} whenever $I^{(x,k)}$ contains precisely one
symbol and both $I^{(x,k-1)}$ and $I^{(x,k+1)}$ are not occupied. 
Further, for any cluster $\cC$, we let $\om_{1}$ also determine the 
relative order of symbols in $\mathcal C$ (w.r.t. the time coordinates of the symbols),
and the value of $G_{(x,t)}$ for all symbols in $\cC$.
(For the ease of description we here name symbols by $(x,t)$ 
although the precise time $t$ is not yet determined. Further,
recall that $G_{(x,t)}$ tells {\em which} 
up  type a symbol has {\em if} its 
type is  up).
Finally, we also let $\om_{1}$ determine the value of $B_{(x,t)}$ for symbols in 
\emph{vertically isolated intervals only}. 

Outcomes $\om_2\in\Omega_2$ determine the precise location of
symbols $(x,t)\in\Pi$, as well as the value $Q_{(x,t)}$ for
\emph{all} $(x,t)\in[\Pi]$, and the value of $B_{(x,t)}$ for every $(x,t)$ that is
{\em not contained in a vertically isolated interval}. 
Write $\cF_{1}$ and $\cF_{2}$ for the corresponding 
$\s$-algebras on $\Om_{1}$ and $\Om_{2}$
respectively.

Following the discussion at the beginning of the section, we can 
obtain the graphical representation $H^q_n$ of the process 
on the space-time box $\ST_n$ as function of
$\om_1$, $\om_2$, and $q$: 
\begin{equation}
\label{coup_def_eq}
H^q_n=H^q_n(\om_1,\om_2).
\end{equation}
(Actually, $H^q_n$ depends also on the remaining parameters
$\ki,\li,\hi$ etc but we suppress this dependence.)
Since $\eta^{(q,n)}_x$ is itself a monotone (in $q$) function of 
$H^q_n$,
$\PP$ gives a coupling of $\eta^{(q,n)}_x$ and $\eta^{(q',n)}_x$ such 
that $\eta^{(q,n)}_x\leq\eta^{(q',n)}_x$ for any $q<q'$.
The restriction of  $H^q_n$ to a cluster $\cC$ is denoted  
$H^q_{n,\cC}=H^q_{n,\cC}(\om_1,\om_2)$.

The `crossover' referred to above is a mapping
$\Om_2\rightarrow\Om_2$ which depends on the outcome
of $\om_1\in\Om_1$.
To this end, fix an instance $\omega_1\in\Omega_1$
(which, as mentioned above, particularly fixes the clusters). 
Since $\PP(\,\cdot\mid\cF_1)$ acts independently on the different
clusters, we can write
$\PP(\,\cdot\mid\cF_1)=\prod_{\cC}\PP_\cC(\,\cdot\mid\cF_1)$.
Fix a cluster $\cC$ and 
write $\PP^{\om_1}_\cC(\,\cdot)=\PP_\cC(\,\cdot\mid\cF_1)(\om_1)$.
In light of~\eqref{eq_cluster_size_bound}, we proceed under the
assumption that $|\cC|<\lceil 9/\alpha \rceil$. 
On the probability space $\Omega_2$ we now define two events. (The definition of these
events involves a cluster $\cC$ and hence also $\omega_1$; however, recall that we consider $\omega_1$ as fixed here.)
The first event is 	
\begin{quote}
$\cB$: in $H^q_{n,\cC}$ there are two symbols whose time coordinates
differ by less than $\delta=n^{-\alpha}$.
\end{quote}
The probability of $\cB$ is maximized when all $|\mathcal C|$ symbols
are in one single interval, so that
\begin{equation}
\label{eq-Bbd}
\PP^{\omega_1}_{\cC}(\cB)\le |\cC|^2\,\frac{2\delta}{\delta_1}\le
2{\lceil 9/\alpha \rceil}^2\,n^{-\alpha/2},\quad\mbox{if }
|\cC|<\lceil 9/\a\rceil,
\end{equation}
which goes to $0$ as $n\to\infty$. 

Before we state the other event, we need the following notion: 
A {\em maximal connected vertical chain}
is a union of occupied $\delta_1$-intervals $I^{(x,k)}$, 
$I^{(x,k+1)},\dotsc, I^{(x,k+m-1)}$, with
$k \geq 0$, $m \geq 1$, and where  $I^{(x,k+m)}$ and (in case $k \geq 1$) $I^{(x,k-1)}$ are vacant.
We call $m$ the length of the chain. Note that a vertically isolated interval (defined earlier) is a 
maximal connected vertical chain of length $1$.

We now define the event $\cG$ that: 
\begin{enumerate}
\item  in $H^q_{n,\cC}$ all 
symbols  are \emph{down} symbols (i.e., all symbols in $\cC$ have $Q$-value larger than $q$),
\item in $H^q_{n,\cC}$,
 each maximal connected vertical chain of length $\geq 2$
  has lowest symbol of type $D_1$ and all
other symbols of type $D_2$, and
\item in $H^{q'}_{n,\cC}$ all symbols are \emph{up} symbols
(i.e., all symbols in $\cC$ have $Q$-value smaller than $q'$).
\end{enumerate}
From the above definitions it follows straighforwardly that
\begin{equation}\label{prob_G_eq}
\PP^{\omega_1}_{\mathcal C}(\cG)\ge 
(q'-q)^{|\mathcal
  C|}\big(\min\{\ki/(\ki+\kii),\kii/(\ki+\kii)\}\big)^{|\cC|}.
\end{equation}

By this and \eqref{eq-Bbd} we thus may choose $n$ sufficiently large 
(not depending on $\mathcal C$ or otherwise on $\omega_1$) such that 
\begin{equation}\label{prob_GB_eq}
	\PP^{\omega_1}_{\mathcal C}(\cG)\ge \PP^{\omega_1}_{\mathcal C}(\cB)
	\qquad\text{if $|\mathcal C|< \lceil 9/\alpha \rceil$}.
\end{equation}

Write $\cB'=\cB\setminus \cG$.
From the above we get, for
$n$ sufficiently large, that if $|\mathcal C|< \lceil 9/\alpha \rceil$ then
there exists a measurable subset $\cG'\subset \cG\setminus \cB$ and 
a measure-preserving 1-1 map $\psi_{\mathcal C}$ 
on $\Omega_2$ such that 
\begin{itemize}
\item $\psi_{\cC}(\cB')=\cG'$,
\item $\psi_{\cC}(\cG')=\cB'$, and
\item  $\psi_{\cC}(\omega_{2})=\omega_2$ whenever 
$\omega_2\not\in \cB'\cup \cG'$. 
\end{itemize}
If, on the other hand, $|\mathcal C|\ge \lceil 9/\alpha \rceil$, then
we let $\psi_{\mathcal C}$ be the identity on $\Omega_2$. 

The map $\psi_\cC$ is the crossover mentioned before.
Since $\psi_{\mathcal C}$ is measure-preserving on $\Omega_2$, we 
obtain a new coupling of the graphical representations on 
the cluster $\mathcal C$ by considering the graphical 
representation 
$$\tilde H_{n,\cC}^{q'}:=H_{n,\cC}^{q'} (\om_1,\psi_{\cC}(\om_2)),$$
cf.\ \eqref{coup_def_eq}. 
The `overall coupling' is then obtained by constructing
$\tilde H_{n,\cC}^{q'}$ for each cluster ${\cC}$ independently.
The resulting graphical representation is denoted $\tilde H_{n}^{q'}$. 
We construct $\eta^{(q,n)}_x$ from the graphical 
representation $H_{n}^{q}=H_{n}^{q} (\omega_1,\omega_2)$, 
and $\eta^{(q',n,\d)}_x$ from $\tilde H_{n}^{q'}$. 

Finally, we check that this coupling has the desired properties. 
For a given $\omega_1$, let $\mathcal C$ be one of the clusters.
Recall that $H_{n,\cC}^{q'}\geq H_{n,\cC}^{q}$.
We have to study $\tilde H_{n,\cC}^{q'}$ and compare it with $H_{n, \cC}^q$. These objects depend on $\om_2$.
There are three cases: 
\begin{enumerate}
\item[(i)] Case $\omega_2\not\in(\cB\cup \cG)$. 
This case is simple: by the definition of the coupling
procedure, we have
$\psi_{\mathcal C}(\omega_2)=\omega_2$, and  
$\tilde H_{n,\cC}^{q'} = H_{n,\cC}^{q'}$, which (as we recalled above) dominates $H_{n,\cC}^{q}$.
Moreover, 
since $\omega_2\not\in \cB$ we know that $H_{n,\cC}^{q}$, and 
hence also $\tilde H_{n,\cC}^{q'}$, does not have two symbols of which the
time coordinates differ less than $\delta$. This settles case (i).
\item[(ii)]  Case $\omega_2\in \cB'$.
Then, by the definition of the coupling procedure, 
$\psi_{\mathcal C}(\omega_2)\in \cG'\se \cG\setminus \cB$.  
By the definition of $\cG$, this implies that all symbols in
$\tilde H_{n,\cC}^{q'}$ are up symbols. Since the precise type of an up symbol is determined by
$\omega_1$, we get that each symbol in $\tilde H_{n,\cC}^{q'}$ `dominates' the corresponding
symbol in $H_{n,\cC}^{q}$. Moreover, since 
$\psi_{\mathcal  C}(\omega_2)$ is not in $\cB$, 
there are no symbols in $\tilde H_{n,\cC}^{q'}$ of which the 
time coordinates differ less than $\delta$.
Finally, the {\em order} (w.r.t.\ time) 
of the symbols in $\tilde H_{n,\cC}^{q'}$ is the same as for $H_{n,\cC}^{q}$
(recall that the order is determined by $\omega_1$). This settles case (ii).
\item[(iii)] Case $\omega_2\in \cG$.
Then, by the definition of $\cG$, the types in
$H_{n,\cC}^{q}$  
on the maximal connected vertical chains that are not
single vertically isolated intervals, are as `unfavourable' as
possible: 
Consider such a chain
and let $I^{(z,k)}$ be its
`highest' (i.e., with largest time index) interval. 
Since the symbol with smallest time coordinate on the chain
has type $D_1$ and the others $D_2$, and since there are no 
incoming arrows, it follows that,
for all vertices $x$ for whose state at time $0$
this part of space-time is `relevant' 
(i.e. for all $x \in L_n$ with $d(x,v) \leq \sqrt n$),  
$$\eta_z^{(x;q,n)}(-k \delta) = -1.$$
Further, each single vertically 
isolated interval has (by the definition of $\cG$)
in $H_{n,\cC}^{q}$ a `down' symbol. Since the precise type of 
this down symbol is determined by $\omega_1$,
it follows that the corresponding symbol in $\tilde H_{n,\cC}^{q'}$ is 
either the same type of down symbol,
or an up symbol. From these considerations it follows that, no 
matter how the symbols in $\tilde H_{n,\cC}^{q'}$
are located precisely, we have that, if $\cC$ would be the only cluster,
then
each space-time point $(z,t)$ which is the `highest point'
of a maximal connected vertical chain of $\cC$, satisfies 
\begin{equation} \label{eq-r-eta-ineq}
\eta_z^{(x;q',n,\delta)}(t) \geq \eta_z^{(x;q,n)}(t) \mbox{ for all } x \in L_n \mbox{ with } d(x,v) \leq \sqrt n.
\end{equation}
\noindent
This settles the last case.
\end{enumerate}

At the end of case (iii) we stated that \eqref{eq-r-eta-ineq} would hold if $\cC$ is the only cluster. In fact,
by combining this statement with
the conclusions concerning case (i) and (ii), and the monotonicity of the contact process dynamics, it follows
that \eqref{eq-r-eta-ineq} also holds (for such $(z,t)$) if there are other clusters (as long as all clusters have size
$\leq 9/\alpha$).  This
completes the proof of Lemma \ref{stab_lem} for 
Model A.
\end{proof}

\begin{proof}[Sketch proof for Model B]
The argument for Model B has the same
structure, but the details are considerably simpler.  
One difference is that now the only information represented by $\omega_1$ is the number of symbols in each interval (which in turn defines the clusters) and the values of $U_{(x,t)}$. 
All other information (the precise locations of the symbols and the $Q_{(x,t)}$- and $B_{(x,t)}$-values) are represented by $\omega_2$.
Another difference is that we modify the definition of the event $\cG$ to the following:
\begin{enumerate}
\item  in $H^q_{n,\cC}$ all symbols are of type $D_2$;
\item in $H^{q'}_{n,\cC}$ all symbols are \emph{up} symbols.
\end{enumerate}
This implies $Q_{(x,t)}\in(q,q')$ for all symbols in $H^q_{n,\cC}$. 
In particular, no special `treatment' of maximal connected vertical chains is needed anymore.
Equation~\eqref{prob_G_eq} becomes
\[
\PP^{\om_1}_\cC(\cG)\geq
(q'-q)^{|\cC|} \big(\ki/(\ki+\kre)\big)^{|\cC|},
\]
so that~\eqref{prob_GB_eq} still holds
for large enough $n$.  Thus we may define  crossover
maps $\psi_\cC$ and the modified graphical representation
$\tilde H^q_n$ as before.  To check that this coupling
has the required properties, we distinguish again the 
three cases (i)--(iii) as
for the 3-state model. 
Indeed, the arguments for cases (i) and (ii) apply verbatim as in the 3-state case. 
Case (iii) is now considerably simpler than before, because $\omega_2\in\cG$ implies that$H^q_{n, \cC}$ has only $D_2$ symbols and hence ${\tilde H}^{q'}_{n, \cC}$ is always `at least as good'.
\end{proof}

\subsection{Proof of Theorem~\ref{3st_sharp_thm}}
\label{sharp_pf_sec}

Let $\ui_q$ denote the upper invariant measure for the
contact process defined as above with
parameter value $q$.  Let $0<q_1<1$ be such that
under $\ui_{q_1}$ the cluster size $|C_0|$
does \emph{not} have
exponential tails.  Let $q_2 > q_1$. We will deduce that
$\ui_{q_2}(|C_0|=\oo)>0$. This immediately implies 
Theorems~\ref{3st_sharp_thm}.

By the first part of Lemma~\ref{fin_lem} and Lemma~\ref{rsw_lem}
we have that there exists $\eps_1>0$ and 
a sequence $n_i\rightarrow\oo$ such that 
\[
\ui_{q_1}(H(4n_i,n_i))\geq\eps_1\quad
\mbox{ for all } i\geq1.
\]
Recall that $L_n$ denotes the $4n$-by-$n$
rectangle $[n,5n]\times[n,2n]$, and
write $H_i=H(L_{n_i})$.
Recall the definition of $\eta^{(q,n)}$ from the paragraphs preceding and below \eqref{X_def_eq}.
By monotonicity (cf.\ Lemma~\ref{3st_mon_lem} 
and the discussion around there)
the realization of $\ui_q$ on $L_n$ is dominated by
$\eta^{(q,n)}$ for all $n\geq1$.  We deduce that
\begin{equation}\label{eq-r-etep}
\PP(\eta^{(q_1,n_i)}\in H_i)\geq\eps_1\quad
\mbox{ for all } i\geq1.
\end{equation}
Fix  $q'\in(q_1,q_2)$.  By Lemma~\ref{stab_lem} and \eqref{eq-r-etep},
\[
\PP(\eta^{(q',n_i,\d)}\in H_i)
\geq\eps_2:=\eps_1/2
\]
for all large enough $i\geq1$.
The latter probability is defined in terms of the
Bernoulli variables $X$ of~\eqref{X_def_eq}, so
in principle Lemma~\ref{infl_lem} could now be applied.
However, we have no good way of bounding the number
$N$ of variables with maximal influence.  To get around this,
we consider a `symmetrized' version of the event $H_i$.
A similar method was used in e.g. ~\cite{BolloRiord06} and
~\cite{Berg11} and is standard in this type of
argument;  
here we use the `truncation'
implicit in the definition of the $\eta^{(q,n,\d)}$
and hence, ultimately, the fast convergence of the 
dynamics (Lemma~\ref{exp_lem} and Assumption~\ref{assump}).

Recall that $R_n$ is the box $[0,6 n] \times [0, 3 n]$, and
consider the `periodic' set $R_n^{\mathrm{per}}$
obtained from $R_n$ by identifying the left and right
sides;  that is, identifying points $(6n,y)$
and $(0,y)$.  We can consider $L_n$ as a subset
of $R_n^{\mathrm{per}}$ rather than $\ZZ^2$.  Since the variables
$\eta^{(q',n_i,\d)}$ are `truncated' at distance 
$\sqrt{n_i}$ the probability that
$\eta^{(q',n_i,\d)}\in H_i$ is (for large enough $i$)
unchanged under this change of geometry.  Let $A_i$
be the event that there is a horizontal crossing of 1's
of at least one of the $6n_i-1$ horizontal translates of
$L_{n_i}$ in $R_{n_i}^{\mathrm{per}}$.  Thus
\begin{equation}\label{pi_i_eq}
\pi_i(q'):=\PP(\eta^{(q',n_i,\d)}\in A_i)\geq
\PP(\eta^{(q',n_i,\d)}\in H_i)
\geq\eps_2,
\end{equation}
for all sufficiently large $i$.

We apply Lemma~\ref{infl_lem} to the event $A_i$.
By symmetry, all $6n_i-1$ horizontal translates
of $X^{(q',k,\d)}_\tau(v)$ have the same influence,
so the number $N$ of Lemma~\ref{infl_lem}
satisfies $N\geq 6n_i-1\geq n_i$.  The number
$m$ of that lemma corresponds to the number of
different types $\tau$ where we don't 
distinguish between different directions of arrows 
(because the Poisson intensities do not depend
on these directions).
Thus $m=6$
for Model A, and $m=5$ for Model B.  
(The number $n$ of variables
of each type which appears in that lemma does
not figure in the conclusion, so it is irrelevant for us.)

For the next step of the argument, we consider the two models separately. 
Consider Model A first.
We let $p_1,\dotsc,p_6$ denote the probabilities 
that $X^{(q',k,\d)}_\tau(v)$ equals 1 for $\tau=U_1$, 
$\tau=U_2$, $\tau \in A_2$,
$\tau\in A_1$, $\tau=D_1$, and
$\tau=D_2$, respectively.  Recall that
$\ki+\kii+4\li+4\lii+\hii+\hii=1$ and that we increase 
$q=4\li+4\lii+\hi+\hii$ while keeping $\ki/\kii$
and the ratios between any two of $\lii, \li, \hii, \hi$ fixed.  
This implies that there are constants $r_1,\dotsc,r_6\in(0,1)$
such that $\hi=r_1q$, $\hii=r_2q$,
$\li=r_3q$, $\lii=r_4q$, $\ki=r_5(1-q)$, and $\kii=r_6(1-q)$.
Hence $p_j$ equals $1-e^{-r_jq\d}$ for $1\leq j\leq4$,
and $1-e^{-r_j(1-q)\d}$ for $j=5,6$.  It follows that
for $i$ large enough
\begin{equation}\label{eq-newlabel1}
\begin{split}
\frac{d\pi_i}{dq}&=
\sum_{j=1}^4 \d r_je^{-r_j\d q}\frac{\partial\pi_i}{\partial p_j}-
\sum_{j=5}^6 \d r_je^{-r_j\d (1-q)}\frac{\partial\pi_i}{\partial p_j}\\
&\geq
\d C\Big[
\sum_{j=1}^4 \frac{\partial\pi_i}{\partial p_j}-
\sum_{j=5}^6 \frac{\partial\pi_i}{\partial p_j}\Big]\\
&\geq \d C \log N
\frac{\pi_i(1-\pi_i)}{K'\d\log(2/\d)},
\end{split}
\end{equation}
for some constants $C, K'$, and where the last inequality comes 
from Lemma~\ref{infl_lem}.

Let $\eps_3>0$ and suppose that $\pi_i(q'')<1-\eps_3$
for all $q''\in(q',q_2)$.  Using that $N\geq n_i$
and $\d=n_i^{-\a}$ we deduce from~\eqref{pi_i_eq} and \eqref{eq-newlabel1} that
$\pi_i(q_2)\geq C(q_2-q')\eps_2\eps_3/\a$.
Choosing $\a$ suffiently small we reach the following
conclusion:
\begin{equation}\label{q2_eq_2}
\forall\eps^*>0\,\exists\a>0:
\mbox{ for large enough } i, \; \pi_i(q_2)\geq1-\eps^*.
\end{equation}
For Model B, we obtain \eqref{q2_eq_2} in 
literally the same way, except that $\lii=r_4=p_4=0$ 
(because there are no $A_2$ symbols) and $\kii=\kre$. 

This final argument is the same for both models.  
Note that the event $A_i$ implies that there is a
horizontal crossing of at least one of the following rectangles
(regarded as subsets of $R_n^{\mathrm{per}}$):
\[
[jn_i,(j+3)n_i\,(\mbox{mod }6n_i))]\times [n_i,2n_i]\quad
0\leq j\leq 5.
\]
Thus (by using the \textsc{fkg}-inequality)
for $\hat\eps>0$ as in Lemma~\ref{fin_lem},
\[
\PP(\eta^{(q_2,n_i,\d)}\in H(3n_i,n_i))\geq 1-
(1-\PP(\eta^{(q_2,n_i,\d)}\in A_i))^{1/6}\geq 1-\hat\eps/2
\]
for all sufficiently large $i$, where the last inequality comes from 
\eqref{q2_eq_2}.
The family of random variables $(\eta^{(q_2,n_i,\d)}_x:x\in[n,4n]\times[n,2n])$ is 
clearly stochastically dominated by the family $(\eta^{(q_2,n_i)}_x:x\in[n,4n]\times[n,2n])$, and the law
of this latter family has (by Lemma~\ref{exp_lem} and Assumption~\ref{assump}, and using standard
arguments) total variation distance at most 
\[
C_1'\cdot 3n_i^2\exp\big(-C_2'\sqrt{n_i}\big)
\]
from $\ui_{q_2}$.  Hence $\ui_{q_2}(H(3n_i,n_i))\geq 1-\hat\eps$
for large enough $i$, which by Lemma~\ref{fin_lem}
implies that $\ui_{q_2}(|C_0|=\oo)>0$.
This completes the proof of Theorem~\ref{3st_sharp_thm}.
\qed

\subsection{Proof of Corollary~\ref{3st_sharp_cor}}
\label{sharp_prop_sec}
Recall that
for $x,y\in\RR^k$ we write $x\prec y$
if each coordinate of $x$ is strictly
smaller than the corresponding coordinate of $y$.
We write $\RR_+=(0,\oo)$.
When proving Corollary~\ref{3st_sharp_cor} we make
use of the following fact.

\begin{lemma}\label{dilworth_lem}
Let $n\geq1$ be fixed and let
$B\se\RR_+^3$ be a measurable set with 
the following property:
\begin{equation}\label{property}
\mbox{if } a_1,a_2,\dotsc,a_m\in B
\mbox{ satisfy } a_1\prec a_2\prec \dotsb \prec a_m
\mbox{ then } m\leq n.
\end{equation}
For $y\in\RR_+^2$ write $B(y)=(\RR_+\times\{y\})\cap B$
and let $\G$ denote the set of $y\in\RR_+^2$
such that $B(y)$ is uncountable.
Then $\G$ has measure zero.
\end{lemma}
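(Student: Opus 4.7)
My plan is to argue by contradiction: assume $\Gamma$ has positive two-dimensional Lebesgue measure, and from this extract a $\prec$-chain of length $n+1$ in $B$, contradicting~\eqref{property}.

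The starting point is an auxiliary chain lemma: any measurable $U\subseteq\RR_+^2$ of positive Lebesgue measure contains arbitrarily long $\prec$-chains. Indeed, at any Lebesgue density point $y^*$ of $U$, continuity of translation in $L^1$ implies that for any $m\geq 1$ and every sufficiently small $\eps>0$ one has
\[
\Big|U\cap\bigcap_{j=1}^{m-1}\bigl(U-j\eps(1,1)\bigr)\Big|>0,
\]
so any $z$ in the intersection produces the chain $z\prec z+\eps(1,1)\prec\cdots\prec z+(m-1)\eps(1,1)$ in $U$.

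Next, I would decompose $B$ into antichains Mirsky-style. Let $\tau(b)$ denote the length of the longest $\prec$-chain in $B$ ending at $b$; since chains have length at most $n$, the map $\tau\colon B\to\{1,\dots,n\}$ is well defined, and $b\prec b'$ forces $\tau(b)<\tau(b')$, so each level set $A_k:=\tau^{-1}(k)$ is an antichain. For each $y\in\Gamma$, $B(y)=\bigsqcup_{k=1}^n A_k(y)$ is uncountable, so some $A_k(y)$ is uncountable; pigeonholing, there is a $k$ with $\{y:A_k(y)\text{ uncountable}\}$ of positive measure. Write $A=A_k$. By intersecting with a large cube $[0,M]^3$ and then restricting to a fiber-diameter stratum, one finds $\eps>0$ and $M<\infty$ such that
\[
\Gamma^A_\eps:=\{y\in[0,M]^2:\mathrm{diam}(A(y)\cap[0,M])\geq\eps\}
\]
has positive two-dimensional Lebesgue measure.

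The contradiction is now quantitative. By the auxiliary chain lemma applied to $\Gamma^A_\eps$, for any $m$ we obtain a chain $y^{(1)}\prec\cdots\prec y^{(m)}$ in $\Gamma^A_\eps$. Since $A$ is an antichain, if $j<\ell$ then no pair $x\in A(y^{(j)})$, $x'\in A(y^{(\ell)})$ can satisfy $x<x'$ (otherwise $(x,y^{(j)})\prec(x',y^{(\ell)})$ would be a nontrivial comparison in $A$), forcing $\inf A(y^{(j)})\geq\sup A(y^{(\ell)})$. Combining this with $\sup A(y^{(j)})-\inf A(y^{(j)})\geq\eps$ and telescoping gives
\[
M\geq\sup A(y^{(1)})\geq\sup A(y^{(m)})+(m-1)\eps\geq(m-1)\eps,
\]
hence $m\leq 1+M/\eps$. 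This bounds $m$, contradicting the freedom in its choice. The main technical obstacle is the measurability of the Mirsky decomposition: the rank function $\tau$ is defined via existential quantifiers over $B^{k-1}$, so each $A_k$ is \emph{a priori} only analytic-minus-analytic and hence only universally (Lebesgue) measurable rather than Borel; one must handle this carefully, or else bypass it by working with measurable selections and outer measure throughout.
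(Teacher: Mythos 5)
Your proof takes a genuinely different route from the paper's, though both rest on the same two pillars: the Dilworth/Mirsky decomposition of $B$ into $n$ antichains, and the observation that if $A$ is an antichain and $y\prec y'$ then every point of $A(y)$ dominates every point of $A(y')$ (so $\inf A(y)\geq\sup A(y')$). From there the paper argues \emph{qualitatively}: it passes to a maximal antichain, picks a rational $q(y)$ in each fiber-interval, switches to polar coordinates, and observes that $r\mapsto q(\theta,r)$ is strictly decreasing on each ray, so each radial slice of $\Gamma$ is countable; Fubini in polar coordinates finishes. You instead argue \emph{quantitatively}: Lebesgue density produces arbitrarily long $\prec$-chains in a positive-measure subset of $\Gamma$, and telescoping the fiber bound $M\geq(m-1)\eps$ caps the chain length, contradiction. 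Your version is more robust (it yields an explicit bound and needs no polar change of variables); the paper's is shorter and, importantly, treats the antichains $A_k$ purely combinatorially.

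That last point is where the one real gap in your write-up sits, and it is slightly more than the cosmetic issue you flag. In the paper's proof, the only set that ever needs to be Lebesgue measurable is $\Gamma$ itself (the $A_k$ enter only through the pointwise statement that each radial slice of $\Gamma_k$ is countable, which requires no measure theory). Your proof, by contrast, needs the set $\Gamma^{A}_\eps$ derived from a single, possibly non-Borel antichain to be a \emph{measurable} set of positive measure, because the density-point argument behind your chain lemma genuinely requires measurability — outer measure alone will not do, since a measurable hull of $\Gamma^A_\eps$ can contain long chains none of whose members lie in $\Gamma^A_\eps$. So "work with outer measure throughout" is not by itself a repair. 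It can be repaired, though, by replacing your density proof of the chain lemma with a Mirsky argument in $\RR_+^2$: any $\prec$-antichain in $\RR_+^2$ is contained in the (Borel, Lebesgue-null) completed graph of a non-increasing function, so any $U\subseteq\RR_+^2$ with no $\prec$-chain of length $m+1$ is covered by $m$ such graphs and hence has outer measure zero; contrapositively, positive outer measure already forces arbitrarily long chains. With that substitution your telescoping argument closes, with outer measure used throughout and no measurability assumption on the $A_k$.
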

\begin{proof}
The partially ordered set $(B,\prec)$ has
height at most $n$, so by (the dual version
of) Dilworth's theorem, $B$ can be partitioned
into $n$ antichains.  An antichain in this situation
is a set satisfying~\eqref{property}
with $n=1$, so it suffices to consider that case.

For $n=1$, note that if $x<x'$ and both 
$(x,y)$ and $(x',y)$ belong to $B$ then 
property~\eqref{property} is preserved if the 
interval $[x,x']\times\{y\}$ is added to $B$.  
Thus we may assume that $B$ is maximal in the sense
that it includes all such intervals.  With each 
$y\in\G$ we may thus associate a rational number
$q(y)$ such that $(q(y),y)$ lies in an interval of
$B(y)$.  We now write $y\in\RR_+^2$ in polar
coordinates $(\theta,r)$ with $\theta\in(0,\pi/2)$
and $r>0$.  Fix $\theta$ and $r<r'$ and write
$y=(\theta,r)$ and $y'=(\theta,r')$.  
If $(x,y)\in B(y)$ and $(x',y')\in B(y')$
then $x'\leq x$,
by~\eqref{property} with $n=1$.  Thus, if $y,y'\in \G$
then we may choose $q(y')<q(y)$.  It follows that
for each $\theta\in(0,\pi/2)$ the set of $r>0$
such that $(\theta,r)\in\G$ is at most
countable.  By Fubini's theorem (using polar
coordinates) it follows that $\G$ has measure zero.
\end{proof}

\begin{proof}[Proof of Corollary~\ref{3st_sharp_cor} for Model A]
Fix arbitrary $\ki,\kii>0$.
Recall that $\hi_\perc$ is decreasing
in each of the parameters $\li,\lii,\hii$.
Since $\hi_\perc(\li,\lii,\hii)<\oo$ 
for $\hii>0$ we may restrict
$(\li,\lii,\hii)$ to one of the (countably many)
sets where $\hi_\perc$ is bounded above by 
a fixed integer $K$.
We call a triple $(\li,\lii,\hii)$ `bad'
if there is some $\d>0$ such that 
\begin{equation}\label{bad_eq}
\hi_\perc(\li',\lii',\hii')\leq
\hi_\perc(\li,\lii,\hii)-\d\mbox{ for all }
(\li',\lii',\hii')\succ(\li,\lii,\hii).
\end{equation}
If $B$ denotes the set of `bad' points, then
we may write $B=\cup_{n\geq 1}B_n$, where
$B_n$ is the set of points such 
that~\eqref{bad_eq} holds with $\d=K/n$.
The set $B_n$ satisfies~\eqref{property}, so
it follows from Lemma~\ref{dilworth_lem} that
for almost all pairs $(\lii,\hii)$ the
set of $\l>0$ such that $(\li,\lii,\hii)$
is bad is countable.  

We are therefore done if
we show that if $(\li,\lii,\hii)$ is \emph{not}
bad, and $\hi<\hi_\perc(\li,\lii,\hii)$,
then the cluster size decays exponentially
for the parameter values 
$\ki,\kii,\li,\lii,\hi,\hii$.  Writing
$\d=\hi_\perc(\li,\lii,\hii)-\hi$ we have that
there exists $(\li',\lii',\hii')\succ(\li,\lii,\hii)$
such that 
\[
\hi_\perc(\li',\lii',\hii')>
\hi_\perc(\li,\lii,\hii)-\d=h. 
\]
The result now follows from 
Theorem~\ref{3st_sharp_thm}.
\end{proof}

\begin{proof}[Proof of Corollary~\ref{3st_sharp_cor} for Model B]
Fix $\k,\kre>0$.  
The argument for Model B is very similar to the one already
given for Model A, using the analog of 
Lemma~\ref{dilworth_lem} with $B\se\RR_+^2$
(in which case we can actually show that $\G$ is 
at most countable). 
One small difference is that
we may now have $h_\perc=\oo$ for 
some $(\li,\hii)$.  We now say that
$(\li,\hii)$ is \emph{bad} if $h_\perc(\li,\hii)<\oo$
and there exists $\delta>0$ such that
\begin{equation}\label{bad_eq_2}
\hi_\perc(\li',\hii')\leq
\hi_\perc(\li,\hii)-\d\mbox{ for all }
(\li',\hii')\succ(\li,\hii),
\end{equation}
and that $(\li,\hii)$ is 
\emph{terrible} if $h_\perc(\li,\hii)=\oo$
but $h_\perc(\li,\hii)<\oo$
for all $(\li',\hii')\succ(\li,\hii)$.
The set $B$ of bad points may be written as
$$B=\bigcup_{K\geq1}\bigcup_{n\geq1} B^{(K)}_n,$$
where $B^{(K)}_n$ is the set of 
points $(\li,\hii)$ such that 
$\hi_\perc(\li,\hii)\leq K$ 
and~\eqref{bad_eq_2} holds with $\delta=K/n$.
Thus $B^{(K)}_n$ satisfies~\eqref{property}.
The set $T$ of terrible points 
satisfies~\eqref{property}
with $n=1$.  It follows that for almost all
$\hii>0$, the set of $\li$ such that
$(\li,\hii)\in B\cup T$ is at most countable.
If $(\li,\hii)\not\in B\cup T$ then
the result follows in the same way
as for Model A.
\end{proof}


\section{Existence of density-driven processes}
\label{ddcp_sec}

In this section
we prove the existence of {\ddcp} (Definition~\ref{ddcp_def})
using a fixed-point argument.  Although this result is strictly
speaking not needed for our main results on sharpness and lack of
robustness (since,
as discussed in Section~\ref{stat_sec}, 
\emph{stationary} {\ddcp} are simply contact processes with
constant parameters), we find it interesting in itself.

We consider 3-state processes with constant $\k,\kii,\lii,\hii$.
Recall that we write $\rho(t)=P(X_0(t)=1)$
for the density of the process.
We let $L^\oo_b$ denote the
set of measurable $h\colon[0,\oo)\rightarrow[0,\oo)$ which are bounded on each
compact subinterval.

We prove the following 
existence result for the Model A;
a completely analogous result holds for Model B.
\begin{theorem}\label{3st_constr_thm}
Let $\dfl,\dfh\colon[0,1]\rightarrow[0,\oo)$ be uniformly Lipschitz continuous.
For all $\ki, \kii, \lii,\hii \geq 0$ and each 
translation-invariant probability measure $\nu$ on
$\{-1,0,1\}^{\ZZ^d}$, there is a unique pair $(\li, \hi) \in
L^\oo_b\times L^\oo_b$ such that
the 3-state contact process (Model A) with initial distribution $\nu$
and parameters $\ki, \kii, \li(\cdot),\lii, \hi(\cdot),\hii$ satisfies
$\li(t) = \dfl(\rho(t))$ and $\hi(t) = \dfh(\rho(t))$ for all $t \geq 0$.
\end{theorem}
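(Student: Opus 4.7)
The plan is to recast the statement as a fixed-point problem for a nonlinear operator on pairs of functions and apply Banach's fixed-point theorem locally in time, then iterate. For any $(\lambda,h)\in L^\oo_b\times L^\oo_b$ the graphical representation of Section~\ref{graphical_sec}, now with time-varying $A_1$- and $U_1$-intensities $\lambda(t)$ and $h(t)$, yields a well-defined translation-invariant process $X^{\lambda,h}$ with initial law $\nu$; write $\rho^{\lambda,h}(t)=\PP(X^{\lambda,h}_0(t)=1)$ and set
\[
\Phi(\lambda,h)(t):=\bigl(\dfl(\rho^{\lambda,h}(t)),\,\dfh(\rho^{\lambda,h}(t))\bigr),\qquad t\ge 0.
\]
A fixed point of $\Phi$ is exactly a pair $(\lambda,h)$ satisfying the conclusion of the theorem.

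The technical core is to show that, on any interval $[0,T]$, the density $\rho^{\lambda,h}$ depends Lipschitz-continuously on $(\lambda,h)$: one aims for a bound of the form
\[
|\rho^{\lambda_1,h_1}(t)-\rho^{\lambda_2,h_2}(t)|\le C(T)\int_0^t\bigl(|\lambda_1(s)-\lambda_2(s)|+|h_1(s)-h_2(s)|\bigr)\,ds,
\]
for $t\in[0,T]$, with $C(T)$ depending only on $T$, on $\ki,\kii,\lii,\hii$, and on a-priori $L^\oo$-bounds for the inputs. I would obtain this by a Poisson thinning coupling: build both graphical representations from a single pair of Poisson point processes whose (constant) intensities dominate all of $\|\lambda_i\|_\oo,\|h_i\|_\oo$, attach to every mark an independent $\mathrm{Uniform}[0,1]$ label, and declare a mark active for process $i$ according to whether the label lies below $\lambda_i(t)$ (respectively $h_i(t)$) divided by the reference intensity. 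Disagreements between the two processes can be initiated only at \emph{mismatched} marks, whose intensity at a given site is bounded by a multiple of $|\lambda_1-\lambda_2|+|h_1-h_2|$; a Gronwall-type estimate applied to the expected number of sites in disagreement with the origin by time $t$ (which satisfies a linear integral inequality with coefficients controlled by the rates) then yields the displayed inequality.

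Combining this Lipschitz estimate with the uniform Lipschitz continuity of $\dfl$ and $\dfh$, we see that for $T>0$ sufficiently small $\Phi$ is a strict contraction on the Banach space $L^\oo([0,T])^2$ equipped with the sup norm, so the Banach fixed-point theorem delivers a unique fixed point on $[0,T]$. To extend to $[0,\oo)$, observe that the associated process has at time $T$ some translation-invariant law $\nu_T$, and iterate the construction on $[T,2T]$ with initial distribution $\nu_T$; pasting the solutions yields both existence and uniqueness on the whole half-line. The argument for Model B is identical after dropping the $A_2$ process and replacing $\kii$ by $\kre$.

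The main obstacle is the propagation-of-disagreements estimate underlying the Lipschitz bound: because disagreement sites themselves spread according to contact-process-like dynamics, without care the constant $C(T)$ could blow up in $T$. Boundedness of the input rates ensures that the expected size of the disagreement cluster reaching the origin grows at most exponentially in $t$, which is more than enough for a short-time contraction; but it is essential to perform the extension from $[0,T]$ to $[0,\oo)$ by iterating short pieces rather than attempting a single global contraction in the sup norm.
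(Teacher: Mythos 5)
Your proposal is correct and follows essentially the same route as the paper: reformulate the problem as a fixed point of the map $(\lambda,h)\mapsto(\dfl(\rho^{\lambda,h}),\dfh(\rho^{\lambda,h}))$, obtain a Lipschitz estimate on $\rho^{\lambda,h}$ in terms of $\|\lambda-\lambda'\|_\oo+\|h-h'\|_\oo$ via a graphical-representation coupling that bounds the discrepancy probability by the expected size of the space-time region of influence (the paper's $\bac_t$, your Gronwall/disagreement-propagation estimate being the same bound phrased forward rather than backward), and then apply Banach's fixed-point theorem on a short time interval and concatenate.
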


\begin{proof}
Let $\hi ,\hi',\li,\li' \in L^\oo([0,\oo),[0,\oo))$, and
let $D_1$, $D_2$, $U_2$ and $A_2$ be as in 
Section~\ref{graphical_sec}. 
The intensities of these processes are kept fixed.
Let $\ul U_1$ be a Poisson process of
intensity $\hi(t)\wedge \hi'(t)$. Let $\tilde U_1^{(\hi)}$ and $\tilde U_1^{(\hi')}$ denote
independent Poisson processes (independent also of $\ul U_1$) with
intensities $\hi(t)-(\hi(t)\wedge \hi'(t))$ and $\hi'(t)-(\hi(t)\wedge \hi'(t))$, respectively.  Write
$U_1^{(\hi)}=\ul U_1\cup\tilde U_1^{(\hi)}$, $U_1^{(\hi')}=\ul U_1\cup\tilde U_1^{(\hi')}$ and
$\ol U_1=\ul U_1\cup\tilde U_1^{(\hi)}\cup\tilde U_1^{(\hi')}$.
In the same way (and independently 
of the Poisson processes above) we define $\ul A_1$,
$A^{(\li)}_1$, $A^{(\li')}_1$ and $\ol A_1$. 
Furthermore, let $m := 1 \vee \sup\{ \l(t)\vee\l'(t):t\geq0\}$,
and let $A_1^{(m)}$ be obtained from $\ol A_1$ by appending
another independent Poisson process of intensity 
$m-(\li(t)\vee\li'(t))$.  Note that an element of $A_1^{(m)}$
(at time coordinate $t$)
belongs to $\ol A_1\setminus\ul A_1$ with probability
$|\li'(t) - \li(t)| / m\leq \|\li'-\li\|_\infty$. 

Let $\ul X$ be the contact process 
with $0\rightarrow1$ transitions given by
$\ul U_1$ and $\ul A_1$
(and remaining transitions given by $D_1$, $D_2$, $U_2$, $A_2$).
Similarly,
$X^{(\hi,\li)}$, $X^{(\hi',\li')}$, and $\ol X$ denote the contact processes
with $0\rightarrow1$ transitions given by
$U^{(\hi)}_1$ and $A^{(\li)}_1$,
with $U^{(\hi')}_1$ and $A^{(\li')}_1$, and with $\ol U_1$ and $\ol
A_1$,  respectively.
The construction above is done such that
$\ul X\leq X^{(\hi,\li)}, X^{(\hi',\li')}\leq\ol X$ holds and 
$\ol U_1\setminus\ul U_1$ has rate $|\hi(t)-\hi'(t)|$.

Now consider,  for each $t \geq 0$,  the set
$\bac_t$, which is defined as the set of space-time points
$(x,s)$, $0 \leq s \leq t$, such that there is a
space-time path from $(x,s)$ to $(0,t)$ using arrows from 
$A_1^{(m)}\cup A_2$.
Further, let 
$|\bac_t|$ be the sum of the  total
Lebesgue measure of all intervals constituting
$\bac_t$, plus the number
of arrows in the space--time paths
in the definition of $\bac_t$. 

Let $B_t$ be the event that none of the arrows in the space-time paths in the definition of $\bac_t$ belongs
to $\ol A_1 \setminus \ul A_1$.
If $B_t$ occurs and $(\ol U_1\setminus \ul U_1)\cap \bac_t = \es$,
then $\ol X_0(t)=\ul X_0(t)$ and hence
$X_0^{(\hi,\li)}(t) = X_0^{(\hi',\li')}(t)$.

Equip $L^\oo([0,\oo),[0,\oo))^2$ with the norm
$\|(\li,\hi)\|=\|\li\|_\oo+\|\hi\|_\oo$, and consider the mapping $R$
from this space to $L^\oo([0,\oo),[0,1])$ given by letting 
$R(\li,\hi)(t)=P(X_0(t)=1)$ where $X$ is the 3-state contact process
with rates $\ki, \kii, \li(\cdot),\lii, \hi(\cdot),\hii$.  Let $\alpha > 0$. By the above we have,
for all $0\leq t\leq\a$,
\[
\begin{split}
|R(\li,\hi)(t) - R(\li',\hi')(t)|&\leq 
P(B_t^c \mbox{ occurs  or }
(\ol U_1\setminus \ul U_1)\cap \bac_t\neq\es)\\
&\leq E|\bac_t|(\|\li-\li'\|_\oo + \|\hi-\hi'\|_\oo),
\end{split}
\]
which by obvious monotonicity is at most
$$E|\bac_\a|(\|\li-\li'\|_\oo + \|\hi-\hi'\|_\oo).$$
Let $K_1,K_2$ be uniform Lipschitz constants for $\dfl,\dfh$.  It
follows that 
\begin{multline*}
\sup_{0\leq t \leq \alpha} |\dfl(R(\li, \hi)(t))-\dfl(R(\li',\hi')(t))|
 + \sup_{0 \leq t \leq \alpha} |\dfh(R(\li,\hi)(t))-\dfh(R(\li',\hi')(t))| \\ \leq
(K_1+K_2) \, E|\bac_\a| \, (\|\li-\li'\|_\oo+\|\hi-\hi'\|_\oo).
\end{multline*}

By standard comparison with a branching process, 
it is easy to see that $E|\bac_\a|$ 
is finite for $\a$ sufficiently small, 
and goes to $0$ as $\alpha \rightarrow 0$.
Hence there is an $\a_0 > 0$ such that the mapping 
$\G=\G^\nu_{\a_0}:(\li,\hi)\mapsto (\dfl(R(\li,\hi)),\dfh(R(\li,\hi)))$
is a contraction of 
$L^\oo([0,\a_0],[0,\oo))^2$.  By Banach's
fixed point theorem, this gives the desired result 
for the time interval $[0, \a_0]$. By repeating (`concatenating') this 
result, it can be extended to $[0, 2 \a_0]$, $[0, 3 \a_0]$, etcetera, which
completes the proof.
\end{proof}

\subsection*{Acknowledgement}

The authors  thank Demeter Kiss for helpful and
interesting discussions about Lemma~\ref{dilworth_lem},
and the anonymous referees for helpful comments.


\bibliographystyle{plain}

\end{document}